\newtheorem{theorem}{Theorem}[section]
\newtheorem{corollary}{Corollary}[theorem]
\newtheorem{ex}[corollary]{Example}
\newtheorem{lemma}[theorem]{Lemma}
\theoremstyle{definition}
\newtheorem{remark}{Remark}[theorem]
\newtheorem*{definition}{Definition}
\newtheorem{examp}[corollary]{Example}
\def\NN{{\mathbb N}}
\let\geq\geqslant
\let\leq\leqslant
\def\hats{{\sc Hats\ }}
\def\HG{{\mathcal G}}
\def\gtimes#1{\mathop{\times\!_{_{#1}}}}
\def\hgn{\text{\rm HG}}
\def\cnst#1{\star #1}
\def\petal{{P\hskip-.6pt e}}
\newcommand{\nsfrac}[2]{\,\raise3pt\hbox{$\scriptstyle{#1}$\hskip-2pt}/\raise-3pt\hbox{\hskip-2pt$\scriptstyle{#2}$}}
\def\pathgame(#1,#2,#3,#4){\lower8pt\hbox{\setlength{\unitlength}{.7mm}\footnotesize
\begin{picture}(14,16)(-2,-6)
\put(0,0){\circle*{2}}\put(10,0){\circle*{2}}
\polyline(0,0)(10,0)
\put(-2,4){#1}\put(9,4){#2}\put(-2,-6){#3}\put(8,-6){#4}
\end{picture}}}
\def\ps@pprintTitle{%
 \let\@oddhead\@empty
 \let\@evenhead\@empty
 \def\@oddfoot{}%
 \let\@evenfoot\@oddfoot}
\begin{document}

\begin{frontmatter}

\title{Hat guessing number of planar graphs is at least 22.
}
\author[inst1]{Aleksei Latyshev}
\ead{aleksei.s.latyshev@gmail.com}
\address[inst1]{ITMO University, St. Petersburg, Russia}

\author[inst3]{Konstantin Kokhas}
\ead{kpk@arbital.ru}
\address[inst3]{St. Petersburg State University, St. Petersburg, Russia}

\begin{abstract}
    We analyze the following version of the deterministic \hats game. We have a graph $G$, and a sage resides at each vertex of $G$. When the game starts, an adversary puts on the head of each sage a hat of a color arbitrarily chosen from a set of $k$ possible colors. Each sage can see the hat colors of his neighbors but not his own hat color. All of sages are asked to guess their own hat colors simultaneously, according to a predetermined guessing strategy and the hat colors they see, where no communication between them is allowed. The strategy is winning if it guarantees at least one correct individual guess for every color assignment. 
    
    In a modiﬁed version of the hat guessing game each sage makes $s\geq 1$ guesses. Given a graph $G$ and integer $s\geq 1$, the hat guessing number $\hgn_s(G)$ is the maximal number $k$ such that there exists a winning strategy.
    
    In this paper, we present new constructors, i.e. theorems that allow built winning strategies for the sages on different graphs.
    Using this technique we calculate the hat guessing number $\hgn_s(G)$ for paths and ``petunias'', and present a planar graph $G$ for which $\hgn_1(G) \ge 22$.
\end{abstract}
\begin{keyword}
graphs \sep deterministic strategy \sep hat guessing game \sep hat guessing number
\end{keyword}

\end{frontmatter}

\section{Introduction}

The \hats game goes back to an old popular Olympiad problem. Its generalization to arbitrary graphs attracted the interests of mathematicians recently (see, e.g.
\cite{bosek19_hat_chrom_number_graph, he20_hat_guess_books_windm,   alon2020hat}).

In this paper, we consider the following general version of \hats game.

Let $G = \langle {V, E} \rangle$ be a visibility graph, each vertex of $G$ is occupied by one sage, and only the sages in the adjacent vertices can see each other. We identify the sages with the graph vertices.

Let $h\colon V\to \NN$ be a \emph{``hatness'' function} that denotes the number of different hat colors a sage can get. For sage $A\in V$, we call the number $h(A)$ the \emph{hatness} of sage $A$. We may assume that the hat color of sage $A$ is a number from the set $[h(A)]=\{0, 1, 2, \dots, h(A)-1\}$.

Let $g\colon V\to \NN$ be a \emph{``guessing'' function} that determines the number of guesses each sage is allowed to make.

A function that is equal to a constant $m$ we denote by $\cnst{m}$.

\begin{definition}
  A hat guessing game or \hats for short is a triple $\HG=\langle {G, h, g} \rangle$, where $G$ is a visibility graph, $h$ is a hatness function, and $g$ is a guessing function. So, the sages are located in the vertices of the visibility graph $G$ and participate in \emph{test}. There is an adversary who plays against the sages. When test starts, the adversary puts on the head of each sage $v$ a hat of a color that he chooses from a set of $h(v)$ possible colors. Each sage can see the hat colors of his neighbors but not his own hat color.   
 The sages do not communicate, and each sage $s$ tries to guess the colors of their own hats by writing a list of $g(s)$ colors. If at least one of their guesses is correct, the sages \emph{win}, and the game is \emph{winning}. Before the test begins, the sages communicate in order to devise a guessing strategy, and this strategy is known to the adversary. It is assumed that the sages' strategy is always deterministic, so that the guess of the sage at a vertex $v$ is uniquely determined by the colors of the hats at the neighbors of $v$.
The games in which the sages have no winning strategy we call \emph{losing}. Color assignments for which none of the sages guess is a \emph{disproving hat placement}.

 We often denote the winning strategy of the sages in the game $\HG$ by the same symbol~$\HG$. 
The game $\langle {G, h, \star1 } \rangle$ is denoted $\langle {G, h} \rangle$. The classic \hats game is $\langle {G, \cnst{m}} \rangle$, the hatness function has constant value $m$ here.
\end{definition}

Given a graph $G$, its \emph{hat guessing number} $\hgn_s(G)$ is the maximal number $k$ such that there exists a winning strategy in the game $\langle {G, \cnst{k}, \cnst{s}} \rangle$ . Let $\hgn(G)=\hgn_1(G)$. Though hat guessing numbers $\hgn_s(G)$ are defined in terms of constant numbers of colors and guesses, the computation of them for an arbitrary graph $G$ is a hard problem and the general version of \hats game gives a more flexible approach for this. The exact values of $\hgn(G)$ are known for a few classes of graphs: for complete graphs, trees (folklore), cycles~\cite{cycle_hats}, and pseudotrees~\cite{Kokhas2018}. Also, there are some results for ``books'' and ``windmills'' graphs, see~\cite{he20_hat_guess_books_windm}, \cite{kokhas_hats_2021}. For bipartite, multipartite and $d$-degenerate graphs some estimations of hat guessing numbers are obtained by Alon et al.~\cite{alon2020hat}, Gadouleau and Georgiou \cite{Gadouleau2015}, He and Li \cite{he_Li_20_hat_guess_dgnr_graphs}, Knierim et al. \cite{Knierim_Martinsson_Steiner2021}, Bradshaw~\cite{Bradshaw_arxiv.2109.13422}.
M. Farnik \cite{Farnik2015} considered relation of the hat guessing number and the maximal degree of a graph and proved that $\hgn(G)< e \Delta(G)$ for any graph~$G$.  In \cite{LATYSHEV2022112868} we presented examples of graphs for which $\hgn(G)=(4/3)\Delta$, and proved that $\hgn(G)$ and diameter of $G$ are independent.

In our previous works~\cite{kokhas_cliques_I_2021}
and~\cite{kokhas_cliques_II_2021} (joint with V.\,Retinsky), we proved several theorems for \hats game $\langle {G, h} \rangle$, that allow to build new winning graphs by combining the graphs, for which their winning property is already proved. We call these theorems \emph{constructors}, they provide a powerful machinery for building strategies. The general \hats game $\HG=\langle {G, h, g} \rangle$ was suggested for the first time by Bla\v{z}ej  et al. in \cite{blazej_bears_2021}, where  the technique of constructors was combined with the independence polynomials approach. In Section 2, we present new constructors for general \hats game. One of them,  theorem~\ref{thm:CancelConstructor} generalizes the construction of clique join in \cite{blazej_bears_2021} and allows to reduce fractions $g(v)/h(v)$ for some vertices that looks very attractive in the spirit of ``theory of fractional hat guessing numbers'' developed in \cite{blazej_bears_2021}.

Let $\mathcal{OP}$ be the class of all outerplanar graphs and $\mathcal{P}\hskip-.5pt lanar$ be the class of planar graphs. If $ \mathcal{C} $ is a class of graphs, we define 
$$
\hgn_s(\mathcal{C})=\sup\{\hgn_s(G): G\in \mathcal{C}\}
$$
and $\hgn(\mathcal{C})=\hgn_1(\mathcal{C})$. It remains an open problem (stated explicitly in \cite{bosek19_hat_chrom_number_graph,Bradshaw_planar}), whether or not $\hgn(\mathcal{P}\hskip-.5pt lanar) <+\infty$. Approaching this problem, Bradshaw \cite{Bradshaw_planar} showed that $\hgn(\mathcal{OP}) <+\infty$. Knierim et al.
\cite{Knierim_Martinsson_Steiner2021} considered \hats game $\HG=\langle {G, \cnst{q}, g} \rangle$ and showed that $\hgn(\mathcal{OP}) <40$. The best known lower bound for the hat guessing number of planar graphs was 14, as proved in~\cite{kokhas_hats_2021}. In Section 3, we prove a better bound $\hgn(\mathcal{OP}) \geq 22$.

Following Bradshaw \cite{Bradshaw_planar} we deﬁne a \emph{petal} graph $\petal_n$ to be a graph obtained from path $P_n$ (containing $n\geq 1$ vertices) by adding a vertex $v$ adjacent to every vertex of $P_n$ (fig.~\ref{fig:petal-petunia}). We say that $v$ is the \emph{stem} of the petal. Then, we deﬁne a \emph{petunia} to be a graph in which every block is a petal graph. Thus, a petunia can be constructed by repeating the procedure that starts from a petal and subsequently adds new petals by gluing one of the vertices of a new petal (not necessarily the stem) to an arbitrary vertex of the petunia. A \emph{royal petunia} is a petunia that can be constructed by repeating a similar procedure that starts from a petal and subsequently adds new petals by gluing the stem of the new petal to an arbitrary vertex of the petunia, fig.~\ref{fig:petal-petunia}.

\begin{figure}[h]
\setlength{\unitlength}{.7mm}\footnotesize
\begin{center}
\begin{picture}(120,30)%
\multiput(0,0)(10,0){5}{\circle*{2}}
\put(20,20){\circle*{2}}
\polyline(10,0)(20,20)(0,0)(40,0)(20,20)(30,0)
\polyline(20,20)(20,0)
\multiput(80,20)(10,0){5}{\circle*{2}}
\put(100,30){\circle*{2}}
\polyline(90,20)(100,30)(80,20)(120,20)(100,30)(110,20)
\polyline(100,30)(100,20)
\multiput(70,10)(8,0){3}{\circle*{2}}
\polyline(80,20)(70,10)(86,10)(80,20)(78,10)
\multiput(80,0)(7,0){4}{\circle*{2}}
\polyline(87,0)(110,20)(80,0)(101,0)(110,20)(94,0)
\multiput(110,5)(7,0){5}{\circle*{2}}
\polyline(117,5)(110,20)(110,5)(138,5)(110,20)(131,5)
\polyline(110,20)(124,5)
\end{picture}
\end{center}

  \caption{Petal and royal petunia}
  \label{fig:petal-petunia}
\end{figure}
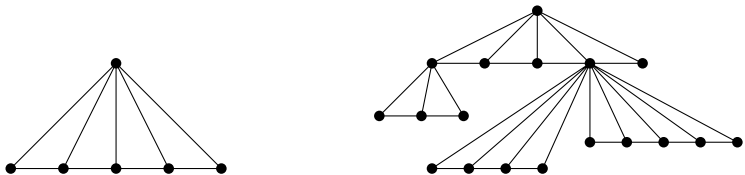

Let $\mathcal{P}\hskip-.5pt ath$, $\mathcal{P}\hskip-.5pt etal$, $\mathcal{RP}$ be the classes of all paths, petals and royal petunias correspondingly. For these classes of graphs we calculate in Section 3 their hat guessing numbers: 
\begin{align*}
\hgn_s(\mathcal{P}\hskip-.5pt ath)&= 4s-2, \\
\hgn_s(\mathcal{P}\hskip-.5pt etal)&= 4s(s+1)-2, \\
\hgn_s(\mathcal{RP})&= 4s(s+1)-2.
\end{align*}

\section{Constructors}

One of the most important and beautiful generalizations of the initial \hats puzzle is the following theorem about complete graphs $K_n$. It was proved by Kokhas and Latyshev \cite[theorem 2.1]{kokhas_cliques_I_2021} for the case $g=\cnst{1}$, and by Blažej,  Dvořák and Opler \cite[theorem 5]{blazej_bears_2021} in general case. This theorem supply us initial ``bricks'' for numerous constructions.

\begin{theorem}\label{thm:clique-win}
The \hats game $\langle {K_n, h, g} \rangle$ is winning if and only if 
\begin{equation}
    \sum_{v\in V(K_n)}\frac{g(v)}{h(v)}\geq 1.
    \label{eq:clique-win}
  \end{equation}
\end{theorem}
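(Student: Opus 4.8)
The plan is to prove the two implications separately: the forward direction (winning $\Rightarrow$ the inequality) by a counting/union-bound argument, and the reverse direction (the inequality $\Rightarrow$ winning) by an explicit strategy living on a circle, generalizing the classical ``everyone announces that the sum of all colors equals his index''.

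\emph{Necessity.} Suppose $\sum_{v} g(v)/h(v) < 1$; I will exhibit a disproving hat placement, so the game is losing. Fix a sage $v$ and any coloring of $V(K_n)\setminus\{v\}$. Whatever list of $g(v)$ colors $v$ writes, exactly $g(v)$ of the $h(v)$ completions of the coloring make $v$'s guess correct. Hence the number of placements on which $v$ is correct equals $g(v)\prod_{w\ne v}h(w)=\dfrac{g(v)}{h(v)}\prod_{w}h(w)$. Summing over $v$ and invoking the hypothesis, the number of placements on which \emph{some} sage is correct is at most $\bigl(\sum_v g(v)/h(v)\bigr)\prod_w h(w) < \prod_w h(w)$, so some placement defeats every sage.

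\emph{Sufficiency.} Assume $\sum_v g(v)/h(v)\ge 1$. A sage $v$ with $g(v)\ge h(v)$ wins on his own, so we may assume $g(v)<h(v)$ for all $v$. Identify the color of sage $v$ with one of the $h(v)$ equally spaced points $c_v/h(v)$ on the circle $\RR/\ZZ$, and place pairwise-consecutive half-open arcs $A_v=[\,a_v,\ a_v+g(v)/h(v)\,)$ around the circle; since their total length is at least $1$, they cover $\RR/\ZZ$. The strategy: sage $v$ reads his neighbors' colors, forms $\theta_v=\sum_{w\ne v}c_w/h(w)\bmod 1$, and guesses exactly those $c_v$ with $c_v/h(v)+\theta_v\in A_v$. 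Because $A_v$ has length $g(v)\cdot\tfrac1{h(v)}$, an integer multiple of the spacing of the points $c_v/h(v)+\theta_v$, it contains exactly $g(v)$ of them, so this is a legal list of $g(v)$ guesses. For the actual placement, $c_v/h(v)+\theta_v=\sum_w c_w/h(w)=:\sigma$ \emph{independently of $v$}; since the $A_v$ cover the circle, $\sigma\in A_v$ for some $v$, and that sage is correct.

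I expect the only genuinely delicate point to be the count of points of the arithmetic progression $\{c_v/h(v)+\theta_v\}$ inside the arc $A_v$: one needs exactly $g(v)$ of them, never $g(v)+1$, and this is precisely where it matters that the arc length is an integer multiple of $1/h(v)$, that the arcs are half-open, and that the degenerate case $g(v)\ge h(v)$ was disposed of first. Everything else is routine bookkeeping, and the two halves together yield the claimed equivalence.
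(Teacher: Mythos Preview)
The paper does not supply its own proof of this theorem; it is quoted as a known result, with the case $g=\cnst1$ attributed to Kokhas--Latyshev and the general case to Bla\v{z}ej--Dvo\v{r}\'ak--Opler. So there is nothing in the paper to compare against, only the cited sources.

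Your argument is correct and is essentially the standard one from those references. The necessity direction is the usual union-bound count; the sufficiency direction is the natural multi-guess extension of the classical ``sum of colors'' strategy, phrased on $\RR/\ZZ$ with arcs of length $g(v)/h(v)$ in place of single points. One cosmetic remark: when $\sum_v g(v)/h(v)>1$ the phrase ``pairwise-consecutive'' is slightly off, since the arcs can no longer be disjoint; but laying them down consecutively from $0$ (allowing the last ones to overlap earlier ones) still gives a cover, and nothing else in the argument uses disjointness. Your handling of the count of lattice points in a half-open arc of length $g(v)/h(v)$, and the reduction of the case $g(v)\ge h(v)$, are exactly the points that need care and you treat them correctly.
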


In figures we use notations from \cite{blazej_bears_2021}: near vertex $v$ we write fraction $g(v)/h(v)$. For example the game \pathgame($\frac s{s+1}$,$\frac 1{s+1}$,$A$,$B$) on the path $P_2$ with vertices $A$ and $B$ is winning by the previous theorem.

\subsection{Product with reducing}

Let $G_1 = (V_1 , E_1 )$ and $G_2 = (V_2 , E_2 )$ be graphs, let $S\subseteq V_1$ be a set of vertices inducing a clique in $G_1$, and let $v\in V_2$ be an arbitrary vertex of $G_2$. The \emph{clique join of graphs $G_1$ and $G_2$ with respect to $S$ and $v$} is the graph $G = (V, E)$ such that $V = V_1 \cup V_2 \setminus \{v\}$; and $E$ contains all the edges of $E_1$, all the edges of $E_2$ that do not contain $v$, and an edge between
every $w\in S$ and every neighbor of $v$ in $G_2$, see fig.~\ref{fig:clique-join}.

\begin{figure}[h]
\begin{center}
\setlength{\unitlength}{275bp}%
  \begin{picture}(1,0.18956614)%
    \put(0,0){\includegraphics[width=\unitlength,page=1]{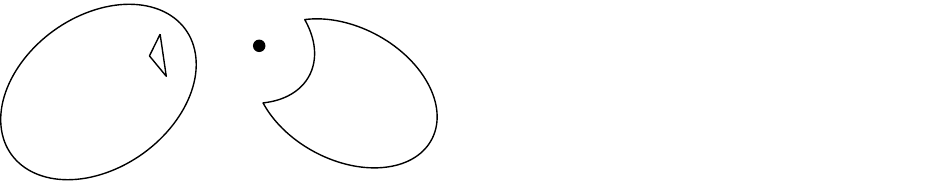}}%
    \put(0.07072003,0.08){$G_1$}%
    \put(0.36,0.08){$G_2$}%
    \put(0,0){\includegraphics[width=\unitlength,page=2]{clqjn.pdf}}%
    \put(0.09838629,0.13445178){$S$}%
    \put(0.26045467,0.17027046){$v$}%
    \put(0,0){\includegraphics[width=\unitlength,page=3]{clqjn.pdf}}%
    \put(0.77477991,0.0134315){$G$}%
    \put(0,0){\includegraphics[width=\unitlength,page=4]{clqjn.pdf}}%
    \put(0.66751359,0.11){$S$}%
    \put(0,0){\includegraphics[width=\unitlength,page=5]{clqjn.pdf}}%
  \end{picture}%
\end{center}
 \caption{The clique join of graphs $G_1$ and $G_2$ with respect to $S$ and $v$}
  \label{fig:clique-join}
\end{figure}

In \cite[Lemma 7]{blazej_bears_2021} Bla\v{z}ej et al. proved the following ``clique join'' constructor for two winning games.

\begin{lemma}[{\cite[Lemma 7]{blazej_bears_2021}}]
\label{lem:CliqueJoin}
Let $G =(V, E)$ and $G' = (V', E')$ be two graphs, $S \subseteq V$ be a set inducing a clique in $G$, $v\in V'$, and $\widetilde G$ to be the clique join of graphs $G$ and $G'$ with respect to $S$ and $v$. If the sages win the games $\bigl(G, h, g\bigr)$ and $\HG' = \bigl(G', h', g' \bigr)$, then they also win the game $\widetilde \HG = (\widetilde G, \widetilde h, \widetilde g)$ where
$$
\widetilde h(u), \widetilde g(u) =
\begin{cases}
  h(u), \ g(u)                &u\in V\setminus S,\\
  h'(u), \ g'(u)              &u\in V'\setminus \{v\},\\
  h(u) h'(v), \ g(u) g'(v)    &u \in S.
\end{cases}
$$
\end{lemma}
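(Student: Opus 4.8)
The plan is to prove Lemma~\ref{lem:CliqueJoin} by constructing an explicit winning strategy for the sages on $\widetilde G$ from the given winning strategies on $G$ and $G'$. The key conceptual point is that the clique join glues $G'$ onto $G$ by letting the whole clique $S$ jointly play the role of the deleted vertex $v$: each sage $u\in S$ has hatness $h(u)h'(v)$, so its hat color can be encoded as a pair $(\alpha_u,\beta_u)$ with $\alpha_u\in[h(u)]$ and $\beta_u\in[h'(v)]$. The first coordinates behave exactly as in the original game on $G$; the second coordinates are where $S$ must reach a consensus about what "virtual hat" to pretend $v$ is wearing.

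First I would set up the encoding: fix bijections $[\widetilde h(u)]\cong[h(u)]\times[h'(v)]$ for each $u\in S$, and for every hat assignment on $\widetilde G$ extract an induced assignment on $G$ (using the $\alpha$-coordinates on $S$ and the actual colors elsewhere in $V$) and an induced assignment on $G'$ (using the actual colors on $V'\setminus\{v\}$, with $v$'s virtual color to be determined). The natural choice for $v$'s virtual color is $\beta := \sum_{u\in S}\beta_u \bmod h'(v)$, the "sum modulo $h'(v)$" of the second coordinates — this is the standard trick for distributing one sage's color across a clique. Note that every vertex of $V\setminus S$ sees, within $\widetilde G$, exactly its $G$-neighborhood, so it can run its $G$-strategy verbatim. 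Every vertex of $V'\setminus\{v\}$ sees its $G'$-neighbors other than $v$ directly, and if $v$ was its neighbor it now sees all of $S$, hence sees all the $\beta_u$ and can compute $\beta$; so it can run its $G'$-strategy verbatim (guessing its true color, which lies in $[h'(u)]=[\widetilde h(u)]$). Finally each sage $u\in S$ sees all the other members of $S$ (they form a clique) and all neighbors of $v$ in $G'$ (by construction of the join) and all its own $G$-neighbors; it uses the $G$-strategy to produce a guess-list for $\alpha_u$, and uses the $G'$-strategy-at-$v$ together with the colors it sees in $G'$ to produce a guess-list for what $v$'s color "should" be — but it does not know $\beta$ exactly, only $\beta-\beta_u\bmod h'(v)$ is unknown to it, wait: it sees all $\beta_{u'}$ for $u'\neq u$ but not $\beta_u$ itself, so it knows $\beta - \beta_u \bmod h'(v)$. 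The point is that $u$'s guess list for its own second coordinate should be chosen so that $\beta_u$ is guessed precisely when the corresponding candidate virtual color for $v$ is one that $v$'s $G'$-strategy would output. Concretely: if the $G'$-strategy at $v$ outputs a set $C\subseteq[h'(v)]$ of $g'(v)$ guesses given what $u$ sees in $G'$, then $u$ includes in its guess list all pairs $(a,b)$ with $a$ in its $G$-strategy guess list and $b$ such that $\bigl(\sum_{u'\neq u}\beta_{u'}\bigr)+b \equiv c\pmod{h'(v)}$ for some $c\in C$; this is $g(u)\cdot g'(v)$ pairs, matching $\widetilde g(u)$.

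Then I would verify correctness: take any hat assignment on $\widetilde G$ with no correct guess, and derive a contradiction. If no sage in $V'\setminus\{v\}$ and no sage in $S$ "catches" the virtual-$v$-color, I claim the induced assignment on $G'$ (with $v$ wearing virtual color $\beta$) is a disproving placement for $\HG'$. Indeed sages in $V'\setminus\{v\}$ guess their true colors by the $G'$-strategy and fail by assumption; and the $G'$-strategy at $v$ would guess some set $C$, but for each $c\in C$ the unique $u\in S$-witness relation shows some $u\in S$ would have guessed the pair realizing $c$ — unless, crucially, that $u$'s $G$-component guess $a$ was also wrong. So more carefully: suppose the induced assignment on $G'$ is winning for the sages (it must be, since $\HG'$ is a winning game), so either some $w\in V'\setminus\{v\}$ guesses correctly — contradicting our no-correct-guess assumption since $w$'s strategy is unchanged — or $v$'s strategy guesses $\beta$ correctly, i.e. $\beta\in C$ for the relevant $C$. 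In the latter case, look at the induced assignment on $G$: it too must be winning, so some vertex guesses correctly; if that vertex is in $V\setminus S$ we are done (contradiction), and if it is some $u\in S$ then $u$'s $G$-strategy guess list contains $\alpha_u$, and since also $\beta\in C$, the pair $(\alpha_u,\beta_u)$ is in $u$'s $\widetilde G$-guess list (because $\beta_u$ is exactly the $b$ making the sum equal the specific $c=\beta\in C$), so $u$ guesses its true color — contradiction.

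The main obstacle, and the part deserving the most care, is the bookkeeping around what each $u\in S$ actually sees versus what it needs: it must be checked that $u$ has access to all information feeding both the $G$-strategy (its $G$-neighborhood) and the $v$-slot of the $G'$-strategy ($v$'s $G'$-neighborhood, all of which is joined to $u$), and that the "missing" quantity is exactly $\beta_u$ so that the consensus value $\beta$ is pinned down by the $S$-clique's collective second coordinates in a way compatible with a single sage's guessing budget. Everything else — the counting $\widetilde g(u)=g(u)g'(v)$, the two-case contradiction — is routine once the visibility and the encoding are correctly arranged. I would also remark that this is precisely the mechanism by which Theorem~\ref{thm:clique-win} gets "upgraded": iterating the lemma realizes the fraction arithmetic $g(u)/h(u) \mapsto g(u)g'(v)/\bigl(h(u)h'(v)\bigr)$ on clique vertices.
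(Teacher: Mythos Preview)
The paper does not give its own proof of this lemma---it is quoted from Bla\v{z}ej, Dvo\v{r}\'ak and Opler, and only the \emph{idea} is sketched in the paragraph preceding Theorem~\ref{thm:CancelConstructor}: encode each $u\in S$'s hat as a composite pair in $[h(u)]\times[h'(v)]$ and let the sages in $S$ guess the two coordinates via $\HG$ and $\HG'$ separately. Your argument is a correct and complete realization of that idea. The ingredient you supply beyond the paper's one-line sketch is the explicit consensus mechanism: defining $v$'s virtual color as $\beta=\sum_{u\in S}\beta_u\bmod h'(v)$, so that every $w\in N_{G'}(v)$ (who sees all of $S$) can compute $\beta$ exactly, while each $u\in S$ (who sees $S\setminus\{u\}$, since $S$ is a clique) knows $\beta-\beta_u$ and can translate $v$'s $\HG'$-guess set $C$ into a guess set of the same size for its own $\beta_u$. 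One point you use implicitly and should state once: all $u\in S$ see the same $N_{G'}(v)$ and hence compute the \emph{same} set $C$; this is why the case ``$v$ wins in $\HG'$'' transfers cleanly to whichever $u_0\in S$ happens to win in $\HG$.

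For comparison, the paper's own generalization (Theorem~\ref{thm:CancelConstructor}) takes a different route: it drops the clique assumption on $S$ but imposes a \emph{predictability} hypothesis, under which the $N_{G'}(z)$-sages can single out one first-stage winner $u\in S$ and use that $u$'s second coordinate directly as $z$'s color. That hypothesis is not automatic for an arbitrary clique $S\subset V(G)$ (the $N_{G'}(v)$-sages cannot see $V\setminus S$ and so cannot simulate the $\HG$-guesses of the sages in $S$), which is exactly why your sum-modulo device is the right tool for Lemma~\ref{lem:CliqueJoin} itself.
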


If $S$ consists of one vertex, the following corollary holds. We formulate it in terms of \cite[theorem 3.1]{kokhas_cliques_I_2021} where it was proved in case $g =g'=\cnst1$. 
For two games $\HG =\left\langle G, h, g \right\rangle$ and $\HG' =\left\langle G', h', g' \right\rangle$ where $V(G)\cap V(G')=\{v\}$ we say that game $\widetilde \HG =\HG\gtimes{v} \HG'$ is \emph{a product of games $\HG$ and $\HG'$ with respect to vertex~$v$} if $\widetilde \HG = \langle {G\cup G', \widetilde h, \widetilde g} \rangle$, where 
$$
\widetilde h(u), \widetilde g(u) =\begin{cases}
  h(u), \ g(u)          &u\in V(G)\setminus \{v\},\\
  h'(u), \ g'(u)        &u\in V(G')\setminus \{v\},\\
  h(v) h'(v), \ g(v) g'(v)   &u =v.
\end{cases}
$$

\begin{corollary}\label{thm:multiplication}
 Let $\HG = \bigl(G, h, g\bigr)$ and $\HG' = \bigl(G' h', g'\bigr)$ be two hat guessing games such that $V(G)\cap V(G')=\{v\}$. If the sages win the games $\HG$ and $\HG'$, then they also win the game $\widetilde \HG = \HG\gtimes{v} \HG'$.
\end{corollary}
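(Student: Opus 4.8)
The plan is to derive Corollary~\ref{thm:multiplication} directly from Lemma~\ref{lem:CliqueJoin} by taking $S=\{v\}$ to be a single vertex. First I would observe that any one-element vertex set trivially induces a clique in $G$, so the hypotheses of Lemma~\ref{lem:CliqueJoin} are satisfied with $S=\{v\}\subseteq V(G)$ and the chosen vertex $v\in V(G')$. Here a slight care with notation is needed: Lemma~\ref{lem:CliqueJoin} uses the name $v$ for the deleted vertex of the second graph $G'$, whereas in the product $\HG\gtimes{v}\HG'$ the vertex $v$ is common to both graphs and is kept; so I would rename, applying the lemma with its ``$v$'' being our $v$ viewed as a vertex of $G'$, and its ``$S$'' being $\{v\}$ viewed inside $G$. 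Since $V(G)\cap V(G')=\{v\}$, the clique join $\widetilde G$ has vertex set $V(G)\cup V(G')\setminus\{v\}$ together with $S=\{v\}$ reinstated, i.e.\ $V(G)\cup V(G')$, and its edges are exactly those of $G$, those of $G'$ not incident to $v$, plus an edge between $v$ and each neighbor of $v$ in $G'$ — which is just $E(G)\cup E(G')$. Hence $\widetilde G=G\cup G'$, matching the definition of the product.

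Next I would check that the hatness and guessing functions coincide. In Lemma~\ref{lem:CliqueJoin}, for $u\in V\setminus S=V(G)\setminus\{v\}$ we get $\widetilde h(u)=h(u)$, $\widetilde g(u)=g(u)$; for $u\in V'\setminus\{v\}=V(G')\setminus\{v\}$ we get $\widetilde h(u)=h'(u)$, $\widetilde g(u)=g'(u)$; and for $u\in S=\{v\}$ we get $\widetilde h(v)=h(v)h'(v)$, $\widetilde g(v)=g(v)g'(v)$. This is precisely the case split defining $\widetilde h,\widetilde g$ in the product $\HG\gtimes{v}\HG'$. Since the sages win $\HG=(G,h,g)$ and $\HG'=(G',h',g')$ by hypothesis, Lemma~\ref{lem:CliqueJoin} yields a winning strategy for $\widetilde\HG=(\widetilde G,\widetilde h,\widetilde g)=\HG\gtimes{v}\HG'$, which is the claim.

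The proof is essentially a matter of specializing the general lemma and carefully matching notation; there is no genuine mathematical obstacle, only the bookkeeping of verifying that ``clique join with respect to a singleton $S$'' is the same operation as ``gluing two graphs along a common vertex.'' The one point that deserves an explicit sentence is the identification of edge sets: one must note that the extra edges introduced by the clique-join construction (between the single vertex of $S$ and the neighbors of $v$ in $G'$) are exactly the edges of $G'$ incident to $v$, so nothing new is created and $\widetilde G=G\cup G'$. After that the statement follows immediately.
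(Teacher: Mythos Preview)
Your proposal is correct and follows exactly the paper's approach: the corollary is obtained from Lemma~\ref{lem:CliqueJoin} by specializing to the case where $S$ consists of the single vertex $v$. Your careful verification that the clique join with $S=\{v\}$ coincides with $G\cup G'$ and that the hatness/guessing functions match is precisely the bookkeeping the paper leaves implicit.
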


Now we are going to generalize the lemma~\ref{lem:CliqueJoin}, by adding a possibility for some set of vertices $I$ to reduce simultaneously the values of $g(u)$ and $h(u)$, $u\in I$ by common factors. 

So, fix a strategy of sages in the game $\langle G, h, g\rangle$ and let $I\subset S\subset V(G)$. We call a pair $(S,I)$ \emph{predictable} if there exists an algorithm such that for any hat colors assignment on $V(G)$ 

1) this algorithm uses the information about the hats assignment on the set $S$ only and determines a sage $u$ that has guessed his color correctly provided that at least one sage from $S$ has guessed his color correctly;

2) in the case $u\in I$ the algorithm determines the set $A_u$ of $u$'s guesses according to his strategy.

The idea of lemma~\ref{lem:CliqueJoin} proof is to write hat colors for $u\in S$ as pairs $(c,c')$, where $c\in [g(u)]$, $c'\in [g'(u)]$, we call these colors \emph{composite}. The sages in~$S$ calculate the coordinates $c$, $c'$ of their composite colors using strategies $\HG$, $\HG'$ separately. The set $S$ in the following theorem  %
has no to be a clique, 
but it is predictable. The sages in $S$ have composite hat colors $(c,c')$, the sages in $N_{G'}(z)$ see the whole $S$, therefore they understand who in $S$ is potentially a winner in subgame $\HG$ and in fact they play subgame $\HG'$ with this winner according to the $G'$-coordinate of his color.

\begin{theorem}\label{thm:CancelConstructor}
Let $G=(V,E)$, $G'=(V',E')$ be two graphs, the games $\HG=\langle G, h, g\rangle$, $\HG'=\langle G', h', g'\rangle$ be winning, and $(S,I)$ be a predictable pair in $G$, $z\in V'$. Assume that for each vertex $v\in I$ the numbers $g(v)$ and $h'(z)$ have a common divisor $s_v$: 
$$
g(v)=s_v a_v, \qquad h'(z)=s_vb'_v
$$
where $a_v$, $b'_v$ are natural numbers. Let $\widetilde G$ be a graph such that 
$$
V(\widetilde G)=V\cup V'\setminus\{z\}, \quad
E(\widetilde G)=E\cup E(G'\setminus\{z\}) \cup \{uw \mid  u\in S, w\in N_{G'}(z) \}.
$$ 
Then the game $\widetilde{\HG}=\langle \widetilde G, \widetilde h,  \widetilde g\rangle$ is winning where
$$
\widetilde h (u), \widetilde g (u) =\begin{cases}
h(u), \ g(u)        &  u\in V\setminus S,\\
h'(u), \ g'(u)     &  u\in V'\setminus \{z\},\\
h(u) h'(z), \ g(u) g'(z)&  u\in S\setminus I,\\
h(u) b'_u, \ a_u g'(z) &  u\in I, 
\end{cases}
$$
\end{theorem}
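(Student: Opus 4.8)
The plan is to mimic the proof of Lemma~\ref{lem:CliqueJoin} (clique join), replacing the clique hypothesis on $S$ with the weaker ``predictable pair'' hypothesis, and to implement the reduction of the fractions $g(v)/h(v)$ for $v\in I$ by a careful re-labelling of the composite colors. First I would set up notation: for a hat assignment on $V(\widetilde G)$, split it into its restriction to $V\setminus S$, to $V'\setminus\{z\}$, and to $S$. For $u\in S\setminus I$ the hat color is a pair $(c,c')\in[h(u)]\times[h'(z)]$, exactly as in the clique join; for $u\in I$ we must realize $[\widetilde h(u)]=[h(u)]\times[b'_u]$ and simultaneously recover from it a well-defined element of $[g(u)]=[s_u a_u]$ (to feed the $\HG$-strategy, which may output up to $g(u)$ guesses) and an element of $[h'(z)]=[s_u b'_u]$ (to feed the $\HG'$-strategy at the ghost vertex $z$). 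The key combinatorial observation is that $[b'_u]$ injects into $[h'(z)]=[s_ub'_u]$ while $[g(u)]=[s_ua_u]$ surjects onto $[s_u]$, so a hat color $(c,\beta)\in[h(u)]\times[b'_u]$ can be routed as follows: the first coordinate $c$ is the $\HG$-color of $u$ exactly as before, the sage computes his $\HG$-guess set $A\subseteq[g(u)]$, reduces each guess modulo $s_u$ (or via the fixed quotient map $[s_ua_u]\to[s_u]$), and pairs the resulting residue with $\beta$ to obtain an element of $[s_u]\times[b'_u]\cong[h'(z)]$, which is the $\HG'$-color he ``declares'' for the ghost vertex $z$.

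Next I would describe the strategies of the three groups of sages. Sages in $V\setminus S$ and in $V'\setminus(\{z\}\cup N_{G'}(z))$ ignore the other graph and play their native strategies. Sages in $S$ compute their first coordinate $c$ via the $\HG$-strategy using what they see inside $G$ (they see all of $N_G(\cdot)$, in particular $S\setminus\{u\}$ is visible since $S$ is predictable — wait, $S$ need not be a clique, so here I must be careful: a sage in $S$ does \emph{not} necessarily see the rest of $S$). This is the point where the clique hypothesis is genuinely used in Blažej et al., and its removal is the crux. The resolution is that sages in $S$ do \emph{not} need to see each other: each $u\in S$ uses the $\HG$-strategy, which only needs $N_G(u)\subseteq V$, to compute a guess list for its $\HG$-color; the role formerly played by ``everyone in $S$ sees everyone in $S$'' is now played by the vertices $w\in N_{G'}(z)$, which by construction are adjacent to \emph{all} of $S$ and hence see the entire composite coloring on $S$. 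So a sage $w\in N_{G'}(z)$ runs the predictability algorithm on the $S$-coloring, obtains the unique candidate winner $u^\ast\in S$ of the $\HG$-subgame, and — if $u^\ast\in I$ — also obtains (again from the algorithm, part 2) the set $A_{u^\ast}$ of $u^\ast$'s $\HG$-guesses; from $A_{u^\ast}$ and the known first coordinate of $u^\ast$'s color, $w$ reconstructs exactly the element of $[h'(z)]$ that the ghost $z$ would be assigned, and then $w$ plays its $\HG'$-strategy as if $z$ had that color. (If $u^\ast\notin I$ the reconstruction is the original one, $s_{u^\ast}=1$, $b'_{u^\ast}=h'(z)$.)

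The verification then runs as in Lemma~\ref{lem:CliqueJoin}. Fix any adversary assignment. By predictability there is a vertex $u^\ast\in S$ that wins its $\HG$-subgame (provided \emph{some} sage in $S$ wins it), and the ghost $z$ receives a well-defined $\HG'$-color $\gamma\in[h'(z)]$; apply the winning $\HG$-strategy and winning $\HG'$-strategy to split into two cases. Case 1: in the $\HG$-game (with $G$-colors read off the first coordinates on $S$) some sage wins. If that sage is in $V\setminus S$ it wins in $\widetilde\HG$ directly; if it is $u^\ast\in S\setminus I$, then the pair $(c^\ast, \cdot)$ it guesses contains the true first coordinate, and since its guess list in $\widetilde\HG$ is the product of the $\HG$-list and the $\HG'$-list at $z$, correctness on the first coordinate plus the fact that $z$'s declared color is derived consistently gives a correct composite guess — here I need the small lemma that the map $[g(u^\ast)]\times[g'(z)]\hookrightarrow\widetilde g(u^\ast)$-guesses is compatible with the coordinate decomposition of $\widetilde h(u^\ast)$; if it is $u^\ast\in I$, same thing but with the reduced factor $b'_{u^\ast}$ in place of $h'(z)$, and one checks the quotient map $[g(u^\ast)]\to[s_{u^\ast}]$ is surjective so no guess is lost. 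Case 2: no sage wins the $\HG$-subgame on $S$; then by the winning property of $\HG$ it must be that the hypothesis ``some sage in $S$ wins'' fails, i.e. every sage outside... — more precisely, if no sage in $V$ wins the $\HG$-game then (by $\HG$ winning) this situation is impossible, so some sage in $V\setminus S$ wins and we are in Case 1; alternatively, if $\HG$ is won by a sage in $S$ we are also in Case 1. The only remaining case is that $\HG$ is won solely by sages that we have routed through $z$: then the ghost $z$ wins the $\HG'$-game, hence (by $\HG'$ winning) some genuine sage of $G'$, i.e. some sage in $V'\setminus\{z\}\subseteq V(\widetilde G)$, wins $\HG'$, and that sage wins in $\widetilde\HG$ too.

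I expect the main obstacle to be precisely the bookkeeping in the two bijections for $u\in I$ — namely fixing, once and for all, an identification $[\widetilde h(u)]\cong[h(u)]\times[b'_u]$ together with a quotient map $q_u\colon[g(u)]\twoheadrightarrow[s_u]$ and an embedding $[s_u]\times[b'_u]\hookrightarrow[h'(z)]$, and verifying that with these choices the $\widetilde g(u)=a_ug'(z)$ guesses afforded to $u$ suffice to cover, for the true first coordinate, \emph{every} $\HG'$-guess at $z$ for \emph{every} residue in $q_u(A_u)$ — which forces the count $|q_u(A_u)|\cdot g'(z)\le a_u g'(z)$, i.e. $|q_u(A_u)|\le a_u$, and this is where $g(u)=s_ua_u$ is used: $A_u\subseteq[g(u)]=[s_ua_u]$ but actually $|A_u|\le g(u)=s_ua_u$ is too weak — the correct argument is that we do \emph{not} take all residues in $q_u(A_u)$ but only the single residue of the \emph{true} $\HG$-color's guess that matters, or, more robustly, we replace $A_u$ by its image grouped so that the total does not exceed $a_ug'(z)$; I would nail this down by having $u$ guess, for each $j\in q_u(A_u)$, only $\lceil a_u g'(z)/|q_u(A_u)|\rceil$ many $\HG'$-completions — no: the clean fix, and the one I would present, is that the $\HG$-strategy's guess list, when pushed through $q_u$, has image of size at most $a_u$ because $g(u)=s_ua_u$ means $[g(u)]$ partitions into $a_u$ fibers of $q_u$ and $u$ need only declare, per fiber hit, the corresponding $\HG'$-guesses, giving at most $a_u\cdot g'(z)=\widetilde g(u)$ total. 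Making this last count airtight is the real content of the theorem beyond Lemma~\ref{lem:CliqueJoin}.
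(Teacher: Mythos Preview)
Your high-level architecture is right and matches the paper: a two-stage strategy where sages in $V$ first play $\HG$ on first coordinates, then a chosen sage in $S$ plays the role of $z$ in $\HG'$; and you correctly pinpoint that the clique hypothesis on $S$ is replaced by having the spectators $w\in N_{G'}(z)$ (who do see all of $S$) run the predictability algorithm to pick $u^\ast$ and recover $A_{u^\ast}$.

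The genuine gap is the reduction step for $u\in I$, and your own final paragraph shows you feel it. Two concrete problems:

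\begin{itemize}
\item \textbf{Type error.} You write ``$A\subseteq[g(u)]$'' and then apply a quotient $q_u\colon[g(u)]\to[s_u]$ to the guesses. But $A_u$ is a set of \emph{colors}, so $A_u\subseteq[h(u)]$ with $|A_u|=g(u)$; there is no given map $[h(u)]\to[s_u]$ (we do not assume $s_u\mid h(u)$). Your subsequent counting (``$[g(u)]$ partitions into $a_u$ fibers of $q_u$'', ``$|q_u(A_u)|\le a_u$'') then collapses: a surjection $[s_ua_u]\to[s_u]$ has $s_u$ fibers, not $a_u$, and in any case $A_u$ does not live in its domain.
\item \textbf{The $z$-color is never well defined.} For the $\HG'$ stage to make sense, each spectator $w\in N_{G'}(z)$ must compute a \emph{single} element $\gamma\in[h'(z)]$ from $u^\ast$'s actual hat $(c,\beta)\in[h(u^\ast)]\times[b'_{u^\ast}]$. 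Your routing produces one residue per guess in $A_{u^\ast}$, not one color.
\end{itemize}

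The missing idea, which the paper supplies, is that the map used to extract the $z$-color must depend on $A_u$ itself --- this is exactly why clause~(2) of predictability hands $A_{u^\ast}$ to the spectators. Concretely: fix once and for all a rule that, given any $g(u)$-element subset $A_u\subseteq[h(u)]$, produces a bijection $\phi_u\colon A_u\to[a_u]\times[s_u]$. If the true first coordinate $c$ lies in $A_u$, write $\phi_u(c)=(\alpha,\sigma)$ and declare the $z$-color to be $(\sigma,\beta)\in[s_u]\times[b'_u]\cong[h'(z)]$. Spectators can compute this because they see $(c,\beta)$ and know $A_u$. Sage $u$ cannot see $(c,\beta)$, but he knows $A_u$ and sees $N_{G'}(z)$, so he computes $B_u\subseteq[h'(z)]$ of size $g'(z)$ via the $\HG'$-strategy at $z$, and then guesses all $(c',\beta')\in A_u\times[b'_u]$ whose reinterpreted form $(\alpha',\sigma',\beta')$ satisfies $(\sigma',\beta')\in B_u$. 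Because $\phi_u$ is a bijection, for each fixed $(\sigma',\beta')\in B_u$ there are exactly $a_u$ choices of $\alpha'$, hence exactly $a_u$ preimages $(c',\beta')$; the total guess count is $a_u|B_u|\le a_ug'(z)=\widetilde g(u)$, which is the bound you were trying to reach.

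Your case analysis also needs tidying: the dichotomy is not ``$\HG$ won in $S$ vs.\ routed through $z$'', but rather (i) $\HG$ is won by some $v\in V\setminus S$, done; otherwise (ii) by predictability the spectators agree on $u^\ast\in S$ with $c\in A_{u^\ast}$, and now $\HG'$ is played with $u^\ast$ as $z$; since $\HG'$ is winning, either some $w\in V'\setminus\{z\}$ guesses correctly (done), or $z$ does --- meaning the true $z$-color lies in $B_{u^\ast}$, and then $u^\ast$'s guess set above contains the true composite color.
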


\goodbreak

Thus, the hatnesses and the guessing numbers of vertices $u\in S\setminus I$ satisfy the product rule as in a usual clique join, but for vertices $v\in I$  these products are reduced by $s_v$: $\bigl(\widetilde h (v), \,\widetilde g(v)\bigr) =\bigl(h(v) b'_v, \,a_v g'(z)\bigr)$ instead of $\bigl(h(v) s_vb'_v, \,s_v a_v g'(z)\bigr)$.

\begin{proof}
We may think that hats of each sage $u\in S\setminus I$ in game $\widetilde{\HG}$ have composite colors from the set  $[h(u)]\times [h'(u)]$ and for sages $u\in I$ the hats have <<reduced>> composite color from $[h(u)]\times [b'_u]$.
We will describe the strategy  $\widetilde{\HG}$, it consists of two stages.

At the first stage, only the sages in $V$ act: they look at the first coordinates of the colors of their neighbors in $G$ and calculate their guesses according to strategy $\HG$. Sages from $V\setminus S$ will tell these guesses to adversary, and if at least one of them has guessed correctly, the game $\widetilde{\HG}$ is winning independently of the other's guesses. 

The sages from $S$ go to the second stage. Observe that if there are sages in $S$ who have guessed the first coordinates of their colors correctly, then by the definition of a predictable pair $(S,I)$ the ``spectators'' from $N_{G'}(z)$ can independently choose a sage $u\in S$ (the same for all spectators) who has guessed his color correctly.
This correct guess means that sage $u$ has constructed a set $A_u=\{c_1, \dots, c_{s_ua_u} \}$, where $c_i\in [h(u)]$ for all $i$, such that $G$-coordinate of $u$'s hat color belongs $A_u$. And in case $u\in I$ all sages in $N_{G'}(z)$ know this set $A_u$.

At the second stage, the sages from $S\cup V'\setminus \{z\}$ act. In fact they play on $G'$ by strategy $\HG'$. 

Each sage $u\in S\setminus I$ looks at hat colors on $N_{G'}(z)$ and calculates 
the set $B_u$ of $G'$-coordinates of his guesses by using strategy~$\HG'$ of sage~$z$. After that $u$ constructs a set $A_u\times B_u$ and declares it to be  the set of his guesses by strategy~$\widetilde{\HG}$.  It is clear that if $u$ guesses his colors correctly at both stages, then $A_u\times B_u$ contains the composite color of his hat and $u$ wins in the game $\widetilde{\HG}$.

Each sage $u\in I$ acts as follows. Sage $u$ reconstructs set $A_u$ in a predetermined way into a Cartesian product $A_u=[a_u]\times[s_u]$. Then, $u$ and those who see him, may interpret $u$'s hat color as element of $[a_u]\times[s_u]\times[b'_u]$. Since $s_u b'_u=h'(z)$, one may think that the latter set is a set $[a_u]\times Z$, where $Z$ is a set of $z$'s hat colors in game~$\HG'$. Thus we present $u$' hat color as a~composite color which second coordinate is good for playing on the graph $G'$ (in the place of~$z$) by strategy $\HG'$. Now $u$ looks at his neighbors in $N_{G'}(z)$ and calculates by strategy $\HG'$ of the sage $z$ the set $B_u$ of $g'(z)$ guesses. Then the set $A_u\times B_u$ is assumed to be his set of guesses by strategy ~$\widetilde{\HG}$. As in the previous case, if $u$ has guessed his colors correctly at both stages, then $A_u\times B_u$ contains the composite color of his hat and $u$ wins in the game $\widetilde{\HG}$.

Now we describe the strategy of sages from $V'\setminus \{z\}$. The sages from  ${V'\setminus N_{G'}(z)}$ just use strategy $\HG'$. The sages from $N_{G'}(z)$ (they a those who can see sage $z$ in graph $G'$) at the end of first stage choose sage $u\in S$. At the second stage, this sage $u$ plays the role of the sage $z$ and the sages in $N_{G'}(z)$ will only look at the component $S$ at the $u$'s hat. 
Moreover, they take into account either $G'$-coordinate of $u$'s hat color if $u\in S\setminus I$, or reconstructed $G'$-coordinate if $u\in I$. Thus, they act as in game $\HG'$. We have explained in previous paragraphs what happens if $u$ guesses correctly. And if some sage from $V(G')\setminus \{z\}$ guesses correctly, the sages win in game $\widetilde{\HG}$.
\end{proof}

For two games $\HG=\langle G, h, g\rangle$ and $\HG'=\langle G', h', g'\rangle$ one of possible ways to apply theorem \ref{thm:CancelConstructor} is to substitute the whole graph $G$ in place of some vertex $z\in V(G')$. In this case $S=I=V(G)$ is of course a predictable pair, and we obtain the following corollary.

\begin{corollary}\label{cor:to-construct-petunia}
Let $G=(V,E)$, $G'=(V',E')$ be two graphs, the games $\HG=\langle G, h, g\rangle$, $\HG'=\langle G', h', g'\rangle$ be winning, $z\in V'$. Assume that for each vertex $v\in V$ the numbers $g(v)$ and $h'(z)$ have common divisor $s$: 
$$
g(v)=sa_v, \qquad h'(z)=sb'_v
$$
where $a_v$, $b'_v$ are natural numbers. Let $\widetilde \HG=\langle \widetilde G,\widetilde  h, \widetilde g\rangle$ be a game obtained by substituting graph $G$ in place of vertex $z$, and the functions $\widetilde h$ and $ \widetilde g$ are defined by 
$$
\widetilde h (u), \widetilde g (u)=\begin{cases}
h'(u), \ g'(u)     &  u\in V'\setminus \{z\},\\
h(u) b'_u, \ a_u g'(z) &  u\in V. 
\end{cases}
$$
Then game  $\widetilde{\HG}$ is winning.
\end{corollary}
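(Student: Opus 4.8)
The statement is precisely the special case of Theorem~\ref{thm:CancelConstructor} in which the entire graph $G$ is plugged into a single vertex $z$ of $G'$, so my plan is simply to unwind the definitions and check that the hypotheses of Theorem~\ref{thm:CancelConstructor} are met, then translate its conclusion into the present notation.

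First I would take $S=I=V(G)$. The pair $(S,I)=(V,V)$ is trivially predictable for any fixed winning strategy of $\HG$: since the game $\HG$ is winning, for every hat assignment on $V$ at least one sage guesses correctly, and an algorithm that reads the hats on all of $S=V$ can simply simulate the full strategy $\HG$, compute every sage's guess list, and output the first vertex $u$ (in some fixed order) whose guess is correct; condition~2) is immediate because for $u\in I=V$ the algorithm has computed the guess set $A_u$ along the way. Next I would observe that the common-divisor hypothesis of Theorem~\ref{thm:CancelConstructor}, namely that for each $v\in I$ there is $s_v$ with $g(v)=s_va_v$ and $h'(z)=s_vb'_v$, is exactly the hypothesis here with the single common value $s_v=s$ for all $v$; there is nothing to verify beyond matching symbols.

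Then I would form the graph $\widetilde G$ prescribed by Theorem~\ref{thm:CancelConstructor}: $V(\widetilde G)=V\cup V'\setminus\{z\}$ and $E(\widetilde G)=E\cup E(G'\setminus\{z\})\cup\{uw\mid u\in S,\ w\in N_{G'}(z)\}$. With $S=V$ this is precisely the graph obtained by deleting $z$ from $G'$, inserting a disjoint copy of $G$, and joining every vertex of $G$ to every former neighbour of $z$ — i.e. ``substituting $G$ in place of $z$'', which is the operation named in the corollary. Plugging $S=I=V$ into the case distinction for $\widetilde h,\widetilde g$ in Theorem~\ref{thm:CancelConstructor}, the first and third cases ($u\in V\setminus S$ and $u\in S\setminus I$) are empty, the second case gives $\widetilde h(u),\widetilde g(u)=h'(u),g'(u)$ for $u\in V'\setminus\{z\}$, and the fourth case gives $\widetilde h(u),\widetilde g(u)=h(u)b'_u,\ a_ug'(z)$ for $u\in I=V$. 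This is verbatim the table in the corollary, so Theorem~\ref{thm:CancelConstructor} yields that $\widetilde\HG$ is winning.

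There is essentially no obstacle here — the only point that deserves a sentence rather than a symbol-match is the verification that $(V,V)$ is a predictable pair, which I would state explicitly because it is the conceptual content being specialized (every winning game, viewed through its own strategy, is ``self-predicting''). Everything else is bookkeeping, and I would present the whole argument in three or four lines: identify $S=I=V$, note predictability, note that the divisor condition and the graph construction coincide with those of Theorem~\ref{thm:CancelConstructor}, and conclude.
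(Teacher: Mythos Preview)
Your proposal is correct and matches the paper's own argument: the paper likewise deduces the corollary from Theorem~\ref{thm:CancelConstructor} by taking $S=I=V(G)$ and noting that this pair is ``of course'' predictable. Your write-up simply spells out the predictability verification and the case-matching more explicitly than the paper does.
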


Simplified version of this substitution constructor (without reducing) was proved in \cite[Theorem 3.2]{kokhas_cliques_I_2021}.

\begin{examp}\label{ex:1-22-planar}
Let $\HG$ be a game on petal $G=\petal_{n}$, where all sages have hatness~22 and 2~guesses (see fig.~\ref{fig:petal-plus}). By theorem \ref{thm:s-petal} below this game is winning for large $n$. Let $\HG'$ be a game \pathgame($\frac 1{2}$,$\frac 1{2}$,,), it is winning by theorem~\ref{thm:clique-win}. The substitution with reducing gives us a winning game on graph $\widetilde{G}$ (fig.~\ref{fig:petal-plus}).
\end{examp}

\begin{figure}
\setlength{\unitlength}{.7mm}\footnotesize
\begin{center}
\begin{picture}(150,60)(0,-35)
\multiput(0,0)(10,0){2}{\circle*{2}\put(-4,-8){$\nsfrac{2}{22}$}}
\multiput(30,0)(10,0){2}{\circle*{2}\put(-4,-8){$\nsfrac{2}{22}$}}
\put(20,20){\circle*{2}}
\polyline(10,0)(20,20)(0,0)(40,0)(20,20)(30,0)
\multiput(60,0)(0,-30){2}{\circle*{2}}
\polyline(60,0)(60,-30)
\put(17,-7){$\ldots$}
\put(22,20){$\nsfrac{2}{22}$}
\put(61,-1){$\nsfrac{1}{2}$}
\put(63,-27){$\nsfrac{1}{2}$}
\put(55,-3){$z$}
\put(10,-40){$G=\petal_{n}$}\put(52,-40){$G'=P_2$}\put(128,-40){$\widetilde G$}
\put(131,21){$\nsfrac{1}{22}$}
\put(131,-33){$\nsfrac{1}{2}$}
\put(127,-5){$\ldots$}
\put(110,0){
\multiput(0,0)(10,0){2}{\circle*{2}\put(-6,-6){$\nsfrac{1}{22}$}}
\multiput(30,0)(10,0){2}{\circle*{2}\put(-1.5,-6){$\nsfrac{1}{22}$}}
\put(20,20){\circle*{2}}
\put(20,-30){\circle*{2}}
\polyline(0,0)(20,-30)(10,0)(20,20)(0,0)(40,0)(20,20)(30,0)(20,-30)(40,0)
}
\put(130,-5){\oval(60,50)[l]}
\end{picture}
\end{center}
  \caption{Substitution with reducing of graph $G$ on place of vertex $z$}
  \label{fig:petal-plus}
\end{figure}
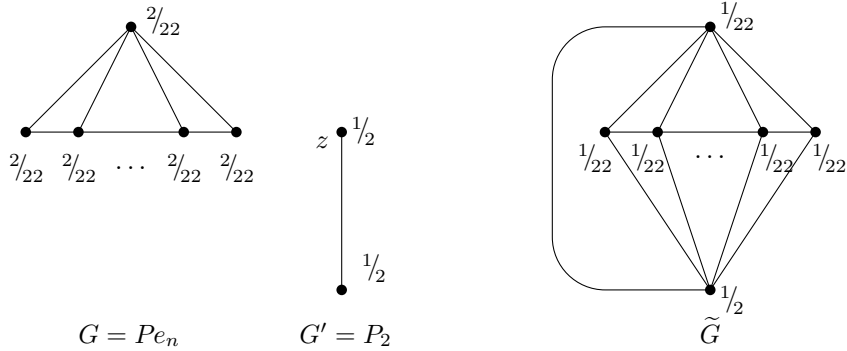

\subsection{Half-edge removal}

In this section we assume that a hat guessing game is played on a directed graph. We denote directed edge $u\to v$ by $\vv{uv}$, it means that vertex $u$ sees vertex $v$. For $E\subset V(G)$ we denote by $G[E]$ the induced subgraph of $G$ with vertex set $E$. If $\HG=\langle {G, h, g} \rangle$, then $\HG[E]=\langle {G[E], h\vert_E, g\vert_E} \rangle$.

Let us notice that a hat guessing game ``in fact'' always takes place on a directed graph, because for each vertex the set of visible vertices is determined by the visibility graph and the property of visibility is not necessarily symmetric.

We start with simple observation.

\begin{lemma}\label{lem:scc}
  Let $\HG = \langle {G, h, g} \rangle$ be a game, and $\langle {S, T} \rangle$
  be a cut of $G$ such that for each vertices $s\in S$ and $t\in T$ directed edge $\vv{ts} \notin E(G)$. Then $\HG$ is winning if and only if at least one of the games $\HG[S]$ or $\HG[T]$ is winning.
\end{lemma}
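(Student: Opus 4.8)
The plan is to prove the two directions separately, with the ``if'' direction being essentially trivial and the ``only if'' direction requiring a careful construction of a disproving hat placement from disproving placements on both parts.

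\medskip

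\emph{Easy direction.} Suppose one of $\HG[S]$, $\HG[T]$ — say $\HG[S]$ — is winning. Then I would let the sages in $S$ play their winning strategy for $\HG[S]$; this is legitimate because every vertex $s\in S$ sees inside $G$ at least every neighbor it has inside $G[S]$ (removing vertices only removes visible vertices, never adds them, and the hatness/guessing functions restrict correctly). For any hat assignment on $G$, the restriction to $S$ is some assignment on $G[S]$, so at least one sage in $S$ guesses correctly; hence $\HG$ is winning. The sages in $T$ can guess arbitrarily.

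\medskip

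\emph{Hard direction.} Suppose both $\HG[S]$ and $\HG[T]$ are losing; I must show $\HG$ is losing, i.e. I must exhibit, for an arbitrary fixed deterministic strategy of the sages on $G$, a disproving hat placement. The key observation is the cut hypothesis: no directed edge goes from $T$ to $S$, so a sage in $S$ sees no vertex of $T$. Therefore, once the hats on $S$ are fixed, the guesses of all sages in $S$ are already determined, independently of what happens on $T$. So the plan is: first choose a disproving hat placement $\sigma_S$ on $G[S]$ for the strategy that the $G$-strategy \emph{induces} on $S$ (formally: fix any constant hat assignment on $T$, say all-zero, and consider the resulting strategy of the $S$-sages as a strategy on $G[S]$; since $\HG[S]$ is losing there is a disproving placement $\sigma_S$, and because $S$-sages ignore $T$ entirely, $\sigma_S$ remains disproving on $S$ for \emph{every} extension to $T$). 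Then, with $\sigma_S$ fixed on $S$, the sages in $T$ — who may see vertices of $S$ — now have their guesses determined by a fixed strategy on $G[T]$ (the $G$-strategy with the $S$-coordinates frozen to $\sigma_S$); since $\HG[T]$ is losing, there is a disproving placement $\sigma_T$ on $T$ for this induced strategy. The combined placement $\sigma_S \cup \sigma_T$ is then disproving on all of $G$: no sage in $S$ guesses correctly (by choice of $\sigma_S$, valid for any $T$-hats), and no sage in $T$ guesses correctly (by choice of $\sigma_T$, given the $S$-hats are exactly $\sigma_S$).

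\medskip

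The main obstacle, and the only place that really uses the hypothesis, is making the first step rigorous: I must argue that the $S$-sages' behavior genuinely factors through $G[S]$, which is exactly the content of ``$\vv{ts}\notin E(G)$ for $s\in S$, $t\in T$'' — it guarantees $N_G(s)\subseteq S$ for every $s\in S$, so freezing $T$'s hats does not constrain anything and the disproving placement on $S$ is robust. The $T$-side needs no such symmetry: there we are allowed to first fix $S$ and then solve the smaller game, so the asymmetry of the cut is precisely what makes the induction go through in one direction only. Everything else is bookkeeping about restricting $h$, $g$, and strategies to induced subgraphs.
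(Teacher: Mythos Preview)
Your overall architecture matches the paper's proof, but you have the direction of visibility reversed, and this makes the argument as written incorrect. In the paper's convention, $\vv{uv}\in E(G)$ means ``$u$ sees $v$''. So the hypothesis $\vv{ts}\notin E(G)$ for all $s\in S$, $t\in T$ says that no sage in $T$ sees any sage in $S$ --- it is the $T$-sages whose guesses are oblivious to the hats on $S$, not the other way around. Your sentence ``no directed edge goes from $T$ to $S$, so a sage in $S$ sees no vertex of $T$'' is a non sequitur, and consequently your claim that $N_G(s)\subseteq S$ for $s\in S$ is false in general: sages in $S$ may very well see into $T$.

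The fix is simply to swap the roles of $S$ and $T$ in your ``hard direction'': first observe that the global strategy, restricted to $T$, is a genuine strategy for $\HG[T]$ (because $T$-sages see nothing in $S$), so pick a disproving placement $\sigma_T$ on $T$; then, with $\sigma_T$ frozen, the $S$-sages (who may see $T$) are playing a fixed strategy on $\HG[S]$, and you pick a disproving $\sigma_S$ for that. This is exactly what the paper does. Your ``easy direction'' is fine.
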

\begin{proof}
  Obviously, if one of games $\HG[S]$ or $\HG[T]$ is winning, then $\HG$ is
  winning too. %

  Now we prove that if both games $\HG[S]$ and $\HG[T]$ are losing, then $\HG$ is losing too. For an arbitrary strategy $f$ for the game $\HG$ we will construct a disproving hats placement. 
    Since no sage in $T$ sees anybody in $S$ we may consider $f\vert_T$ as a strategy in the game $\HG[T]$.
  By assumption, this strategy is losing, so there exists a disproving hat placement $c$ on $T$. If we fix the hat placement $c$ on~$T$ in game $\HG$ then for all possible hat placements on $S$ strategy $f$ determines the guesses of all sages in $S$. In fact we obtain a strategy in $\HG[S]$ that is also losing, 
  so there exists a disproving hat placement~$\tilde c$. Then $c\cup \tilde c$ is a~disproving hat placement for $f$.
\end{proof}

\begin{corollary}
  The game $\HG$ on graph $G$ is winning if and only if for each strongly connected component $H$ of graph $G$ the game $\HG[H]$ is winning. 
\end{corollary}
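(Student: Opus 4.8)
The plan is to obtain the corollary from Lemma~\ref{lem:scc} by decomposing $G$ along its condensation and peeling strongly connected components off one at a time. First I would form the condensation $D$ of $G$: contract each strongly connected component to a single node, obtaining a directed acyclic graph whose nodes are the components $C_1,\dots,C_k$. Since $D$ is acyclic it has a sink; after renumbering I may assume $C_k$ is a sink of $D$, which means that no sage of $C_k$ sees any sage outside $C_k$, i.e. $\vv{ts}\notin E(G)$ for every $t\in C_k$ and every $s\in V\setminus C_k$.

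Next I would apply Lemma~\ref{lem:scc} to the cut $\langle S,T\rangle$ with $T=C_k$ and $S=V\setminus C_k$. The sink property is exactly the one-sided visibility hypothesis of the lemma, so $\HG$ is winning iff $\HG[S]$ is winning or $\HG[C_k]$ is winning. The condensation of $G[S]$ is $D$ with the node $C_k$ deleted, which is again acyclic and again has a sink, so the same step applies to $\HG[S]$. Iterating this peeling over all $k$ components (formally, by induction on the number of components) yields the disjunctive characterization: $\HG$ is winning iff the subgame $\HG[C_i]$ is winning for at least one index $i$. Passing to the contrapositive gives the purely component-wise criterion: $\HG$ is losing exactly when $\HG[H]$ is losing for \emph{every} strongly connected component $H$, so the winning status of $\HG$ is decided by examining the components one by one, which is the content of the corollary.

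The sufficiency half is the monotonicity already implicit in Lemma~\ref{lem:scc}: a winning strategy for any single $\HG[H]$ lifts verbatim to $\HG$, because in $G$ the sages of $H$ see at least the neighbours they see in $G[H]$ and may simply ignore the rest. The only delicate point, and the main obstacle, is the orientation bookkeeping in the condensation: at each peeling step I must certify that the component being removed is genuinely a sink of the current (shrinking) condensation, so that the hypothesis $\vv{ts}\notin E(G)$ of Lemma~\ref{lem:scc} really holds and the lemma's equivalence may be invoked. Acyclicity of the condensation guarantees a sink at every stage, which is precisely what lets the induction close.
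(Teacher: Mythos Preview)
Your argument is correct and is exactly the intended derivation: the paper states the corollary without proof, as an immediate consequence of Lemma~\ref{lem:scc}, and your peeling-off-a-sink induction on the number of strongly connected components is the natural way to spell it out. The orientation check (sink $\Rightarrow$ no arc $\vv{ts}$ from $T=C_k$ into $S$) matches the hypothesis of the lemma precisely.

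One remark worth making explicit: what you actually prove is that $\HG$ is winning if and only if \emph{at least one} component game $\HG[C_i]$ is winning (equivalently, $\HG$ is losing iff every $\HG[H]$ is losing). The corollary as printed reads ``for each'', which taken literally would be false---e.g.\ a disjoint union of a trivially winning vertex and a losing vertex is winning overall but not on every component. Your contrapositive formulation is the correct one; the quantifier in the printed statement is a slip, and you were right not to try to prove the ``all components winning'' version.
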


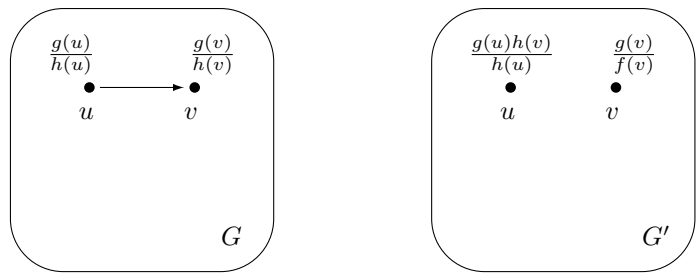
\begin{figure}
\centering
\setlength{\unitlength}{.7mm}\footnotesize
\begin{picture}(120,50)(-10,-35)
\put(0,0){\circle*{2}}\put(20,0){\circle*{2}}
\put(2,0){\vector(1,0){16}}
\put(-8,5){$\frac{g(u)}{h(u)}$}\put(19,5){$\frac{g(v)}{h(v)}$}\put(-2,-6){$u$}\put(18,-6){$v$}
\put(10,-10){\oval(50,50)}
\put(25,-30){$G$}

\put(80,0){
\put(0,0){\circle*{2}}\put(20,0){\circle*{2}}
\put(-8,5){$\frac{g(u)h(v)}{h(u)}$}\put(19,5){$\frac{g(v)}{f(v)}$}\put(-2,-6){$u$}\put(18,-6){$v$}
\put(10,-10){\oval(50,50)}
\put(25,-30){$G'$}}
\end{picture}
    \caption{Half edge removal}
    \label{fig:half-edge-rem}
\end{figure}

The following theorem allows to split a strongly connected graph
(e.g. connected undirected graph) onto strongly connected components.
Though the theorem is almost trivial, it is a real gem that provides short elegant proofs of many known results.

Given a game $\HG = \langle {G, h, g} \rangle$ such that $u$, $v\in V(G)$ and  $\vv{uv}\in E(G)$, we consider new game  $\HG' = \langle G', h, g' \rangle$,  where graph $G'$ is obtained from $G$ by removing edge $\vv{uv}$ (see fig.~\ref{fig:half-edge-rem}), function $h$ has not been changed and $g'$ is obtained from $g$ by a minor transformation:
\begin{equation}\label{eqn:guess-edge-rem}
 g' (x)=\begin{cases}
g(x)     &  x\in V(G)\setminus \{u\},\\
g(u) h(v) &  x=u.  
\end{cases}
\end{equation}
If the edge $\vv{vu}$ was present in graph $g$, this operation does not affect it. 

\begin{theorem}[on half-edge removal]\label{thm:half-edge-removal}
  Let $\HG = \langle {G, h, g} \rangle$ be a winning game, and $\vv{uv}$ be an edge of $G$. Let $\HG' = \langle {G \setminus \vv{uv}, h, g'}\rangle$ be a game obtained by removing directed edge $\vv{uv}$ from graph $G$, where guess function $g'$ is given by~\eqref{eqn:guess-edge-rem}. Then game $\HG'$  is winning.
\end{theorem}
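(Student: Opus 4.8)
The plan is to show that any winning strategy $f$ for $\HG$ can be converted into a winning strategy $f'$ for $\HG' = \langle G\setminus\vv{uv}, h, g'\rangle$. The only sage whose visibility shrinks when we delete $\vv{uv}$ is $u$: in $G'$ the sage $u$ no longer sees $v$. In compensation, $u$'s guessing allowance grows from $g(u)$ to $g(u)h(v)$. The natural idea is that in the new game the sage $u$, not knowing the color on $v$, simply hedges over all $h(v)$ possibilities: for each hypothetical color $c\in[h(v)]$ of $v$'s hat, he computes the $g(u)$ guesses that the old strategy $f$ would prescribe to him if he saw color $c$ on $v$ (together with the colors he actually sees on his other neighbors), and he outputs the union of all these lists. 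That union has size at most $g(u)h(v) = g'(u)$, so it is a legal set of guesses in $\HG'$. Every other sage $x\neq u$ keeps exactly the same strategy, which is legal since their visibility is unchanged (deleting $\vv{uv}$ only removes an out-edge of $u$; the edge $\vv{vu}$, if present, is untouched, so $v$'s view of $u$ is unaffected).

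Next I would verify this $f'$ is winning. Fix any hat assignment $c\colon V\to\NN$ with $c(x)\in[h(x)]$ for all $x$; note the hatness function is unchanged, so $c$ is equally a legal assignment for $\HG$. Since $f$ is winning, under $c$ some sage guesses correctly in $\HG$. Call him $w$. If $w\neq u$, then $w$'s guess set in $\HG'$ equals his guess set in $\HG$ (same strategy, same visible colors), so $w$ still guesses correctly in $\HG'$ and we are done. If $w=u$, then the real color $c(v)$ is one of the $h(v)$ hypotheses $u$ ran through, and for the hypothesis $c'=c(v)$ the list $u$ produced is precisely the list $f$ gives him in $\HG$ under $c$ — which contains $c(u)$ by assumption. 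Since $f'$'s guess set for $u$ is the union over all hypotheses, it contains that list, hence contains $c(u)$, so $u$ guesses correctly in $\HG'$. In either case the sages win $\HG'$, so $\HG'$ is winning.

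I do not expect a genuine obstacle here — as the authors remark, the theorem is "almost trivial." The only points that need a little care are bookkeeping ones: that the guess-set size bound $g(u)h(v)$ is exactly $g'(u)$ as defined in~\eqref{eqn:guess-edge-rem} (and that taking a union, rather than a disjoint union, can only help); that deleting the directed half-edge $\vv{uv}$ genuinely leaves every other sage's information intact, in particular that if $\vv{vu}\in E(G)$ it survives in $G'$; and that the hatness function $h$ is the same in both games so the adversary's moves correspond bijectively. Writing these checks out carefully is the whole proof.
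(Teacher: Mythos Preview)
Your proof is correct and follows exactly the same approach as the paper: every sage other than $u$ reuses the original strategy, while $u$ outputs the union of his old guesses over all $h(v)$ possible colors of $v$, which has size at most $g(u)h(v)=g'(u)$ and necessarily contains the correct guess. The paper's own proof is just a more compressed version of what you wrote.
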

\begin{proof}
  Fix a winning strategy for the game $\HG$. In the game $\HG'$ the sages will use the same strategy everywhere behind the vertex $u$. The vertex $u$ in the game $\HG'$ cannot see $v$, so let the sage $u$ just say the union of guesses for all possible colors of~$v$. Obviously, the number of his guesses is at most $g(u) h(v) = g'(u)$, and this strategy is winning.
\end{proof}

\begin{corollary}\label{cor:half-edge-removing}
  Let $\HG = \langle {G, h, g} \rangle$ be a losing game and $\vv{uv} \notin E(G)$. Let %
  $\HG' = \langle {G \cup \{\vv{uv}\}, h, g'\} }\rangle$, where $g'(x)=g(x)$ for all vertices $x\ne u$ and $g'(u)=\lfloor g(u) / h(v) \rfloor$. Then $\HG'$ is losing.
\end{corollary}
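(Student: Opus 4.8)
The plan is to derive Corollary~\ref{cor:half-edge-removing} from Theorem~\ref{thm:half-edge-removal} by contraposition. Suppose, for contradiction, that the game $\HG' = \langle G \cup \{\vv{uv}\}, h, g'\rangle$ were winning, where $g'(u) = \lfloor g(u)/h(v)\rfloor$ and $g'(x) = g(x)$ for $x \ne u$. The idea is to apply Theorem~\ref{thm:half-edge-removal} to $\HG'$ with the same directed edge $\vv{uv}$: removing this edge from $G \cup \{\vv{uv}\}$ returns us to the graph $G$, and the theorem tells us that the resulting game is winning if we replace $g'(u)$ by $g'(u)\cdot h(v)$, leaving all other guess values unchanged.

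So first I would write out explicitly what Theorem~\ref{thm:half-edge-removal} produces: a winning game $\langle G, h, g''\rangle$ where $g''(x) = g'(x) = g(x)$ for $x \ne u$, and $g''(u) = g'(u)\cdot h(v) = \lfloor g(u)/h(v)\rfloor \cdot h(v)$. Next I would observe the elementary inequality $\lfloor g(u)/h(v)\rfloor \cdot h(v) \le g(u)$, so that $g''(u) \le g(u)$, and hence $g'' \le g$ pointwise. The final step is the monotonicity remark: allowing a sage strictly more guesses can never hurt, so if $\langle G, h, g''\rangle$ is winning then $\langle G, h, g\rangle$ is winning as well --- contradicting the hypothesis that $\HG$ is losing. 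Therefore $\HG'$ must be losing.

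I do not anticipate a genuine obstacle here; the only point requiring a word of justification is the monotonicity step, namely that enlarging $g$ preserves winning (a sage simply pads out his list of guesses with arbitrary extra colors, or repeats an existing guess). This is immediate and could be stated as a one-line observation, or even taken as folklore. One should also double-check the degenerate case $h(v) > g(u)$, in which $g'(u) = 0$: then the sage at $u$ in $\HG'$ makes no guesses at all, which is consistent with the convention (a sage with guess value $0$ never contributes a correct guess), and the argument above still goes through formally since $g''(u) = 0 \le g(u)$. So the proof is genuinely just ``apply Theorem~\ref{thm:half-edge-removal} and then floor-multiply,'' with the contrapositive packaging being the only mild subtlety.
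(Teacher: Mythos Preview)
Your proof is correct and is exactly the intended argument: the paper states this as a corollary of Theorem~\ref{thm:half-edge-removal} without further proof, and the contrapositive you spell out (apply the theorem to remove $\vv{uv}$ from $G\cup\{\vv{uv}\}$, then use $\lfloor g(u)/h(v)\rfloor\cdot h(v)\le g(u)$ and monotonicity of guesses) is precisely what is meant.
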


As examples of application of theorem~\ref{thm:half-edge-removal} we reprove a couple of known statements.

\begin{ex}[{losing part of~\cite[theorem 1.1]{alonHatGuessingNumber2022}}]
  Let $G$ be a graph depicted in fig.~\ref{fig:alon-planar-graph} (with an arbitrary number of horizontal edges). Then the game $\langle {G, \cnst{13}} \rangle$ is losing.
  \label{ex:alon}
\end{ex}

\begin{figure}[h]
\setlength{\unitlength}{.7mm}\footnotesize
\begin{center}
\begin{picture}(70,40)(0,-20)
\multiput(10,0)(25,0){2}{\circle*{2}\put(10,0){\circle*{2}}\polyline(0,0)(10,0)\put(9,-5){12}}
\put(70,0){\circle*{2}\put(10,0){\circle*{2}}\polyline(0,0)(10,0)\put(9,-5){12}}
\put(0,20){\circle*{2}}
\put(0,-20){\circle*{2}}
\polyline(0,-20)(10,0)(0,20)(0,-20)(20,0)(0,20)
\polyline(0,-20)(35,0)(0,20)(0,-20)(45,0)(0,20)
\polyline(0,-20)(80,0)(0,20)(0,-20)(70,0)(0,20)
\put(-6,18){$A$}\put(2,22){$2$}
\put(-6,-23){$B$}\put(2,-24){$3$}
\put(3,-2){13}\put(26,-1){13}\put(65,-7){13}
\put(52,0){\dots}
\end{picture}
\end{center}

  \caption{In example \ref{ex:alon} we consider this game instead of $\langle {G, \cnst{13}} \rangle$}
  \label{fig:alon-planar-graph}
\end{figure}
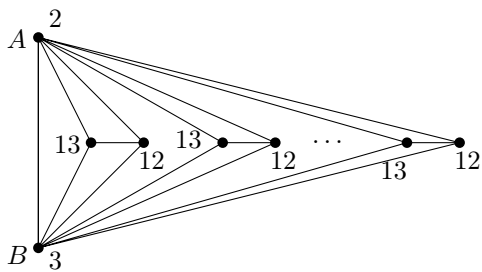

\begin{proof}
We will prove the stronger fact that the game $\langle {G, h} \rangle$ is losing, where hatness function $h$ is depicted in fig.~\ref{fig:alon-planar-graph}. After removing by theorem~\ref{thm:half-edge-removal} all the incoming directed edges to $A$ and to $B$ we obtain the game in which each strongly connected component is either edge \pathgame($\frac 1{2}$,$\frac 1{3}$,,) or edge \pathgame($\frac 6{12}$,$\frac 6{13}$,,). Both games are losing by theorem~\ref{thm:clique-win}.
\end{proof}

\begin{ex}[{\cite[theorem 6]{bosek19_hat_chrom_number_graph}}]
  Let $G$ be a connected graph, and let $V = A \cup B$ be a partition of the
  vertex set of $G$. Let $d = \max_{v\in B}\{|N(v) \cap A|\}$. Then $\hgn_s(G)
  \leq \hgn_{s_1}(G[B])$, where $s_1 = s(\hgn_s(G[A]) + 1)^d$.
\end{ex}
\begin{proof}
  Let $a = HG_s(G[A]) + 1$,  $b = HG_{s \cdot a^d}(G[B]) + 1$ and $h\colon V\to \mathbb N$ be a hatness function such that $h\vert_A=a$, $h\vert_B=b$.
  It is sufficient to prove that the game $\HG = \langle {G, h, \cnst{s}} \rangle$ is losing. Let us remove (by theorem~\ref{thm:half-edge-removal}) all directed edges from $B$ to $A$. Then each strongly connected component in the obtained graph is either a subset of $A$ or a subset of $B$, and the value of the guessing function in each vertex of $B$ is increased by at most $a^d$ times. The obtained game is losing since the games $\langle {G[A], \cnst{a}, \cnst{s}} \rangle$ and $\langle
  {G[B], \cnst{b}, \cnst{(s \cdot a^d)}} \rangle$ are losing. Then the game $\HG$ is losing by several applications of corollary~\ref{cor:half-edge-removing}.
\end{proof}

\subsection{Gluing of losing games}

We say that a vertex $A$ with a fixed guessing number $s$ is \emph{strong} if its hatness is $s+1$. The next constructor demonstrates that two losing games can be glued by the vertex $A$, if in both games the guessing number of $A$ is $s$, and in at least one of the games $A$ is a strong vertex. For $s=1$ this statement was proven in \cite[Theorem 4.1]{kokhas_cliques_II_2021}.

\begin{theorem}\label{thm:s-lose-sum}
Let $G=  G_1+_AG_2$, where $G_1$ and $G_2$ are graphs, for which $V(G_1)\cap V(G_2)=\{A\}$. Let games $\HG_1=\langle G_1, h_1, g_1\rangle$ and $\HG_2=\langle G_2, h_2,g_2\rangle $ be losing, and let the following conditions hold:
$$
g_1(A)=g_2(A)=s, \qquad  h_1(A)\geq h_2(A)=s+1.
$$ 
Then game $\HG=\langle G_1+_AG_2, h,g\rangle$ is losing, where
$$
  h(x) =
  \begin{cases}
    h_i(x),&x\in V_i\setminus\{A\}  \ (i=1, 2), \\
    h_1(A),&x= A,
  \end{cases}
  \quad  g(x) =g_i(x), \  x\in V_i  \ (i=1, 2).
$$
\end{theorem}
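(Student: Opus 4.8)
The plan is to fix an arbitrary strategy $f$ for the glued game $\HG$ and to exhibit a disproving hat placement, using that both $\HG_1$ and $\HG_2$ are losing. Write $N_i:=N_{G_i}(A)$ for the set of neighbours of $A$ in $G_i$; recall that every sage of $V_i\setminus\{A\}$ sees only vertices of $G_i$, while $A$ sees $N_1\cup N_2$, and that $A$'s palette both in $\HG_1$ and in $\HG$ is $[h_1(A)]$. For a colouring $c_2$ of $V_2\setminus\{A\}$ and a colour $c\in[h_1(A)]$ say the pair $(c,c_2)$ is \emph{$2$-dead} if, under $f$, every sage of $V_2\setminus\{A\}$ guesses wrongly when $A$ has colour $c$ and the remaining vertices of $G_2$ are coloured by $c_2$. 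For a colouring $\nu_1$ of $N_1$ set
$$
\mathrm{Bad}(\nu_1):=\bigl\{\,c\in[h_1(A)]\;:\;\text{for every }c_2\text{ with }(c,c_2)\text{ $2$-dead one has }c\in f_A(\nu_1,c_2|_{N_2})\,\bigr\}.
$$

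The heart of the argument — and the only place where strongness of $A$ in $\HG_2$ enters — will be the claim that $|\mathrm{Bad}(\nu_1)|\le s$ for every $\nu_1$. I would prove it by contradiction: pick $s+1$ distinct colours $C=\{c_0,\dots,c_s\}\subseteq\mathrm{Bad}(\nu_1)$. Since $h_2(A)=s+1=|C|$, relabelling $A$'s colours turns $\HG_2$ into an isomorphic, hence still losing, game $\HG_2^{C}$ whose only change is that $A$'s palette is the set $C$. Now play $\HG_2^{C}$ thus: the sages of $V_2\setminus\{A\}$ use $f$ verbatim (they never needed to know that $A$'s colour lay in the larger set $[h_1(A)]$), and sage $A$, seeing a colouring $\mu$ of $N_2$, guesses $f_A(\nu_1,\mu)\cap C$, a set of size at most $|f_A(\nu_1,\mu)|\le g(A)=s$. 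This is a legal strategy, and it wins $\HG_2^{C}$: for a placement with $A=c_i$ and the rest $c_2$, either some sage of $V_2\setminus\{A\}$ is right, or $(c_i,c_2)$ is $2$-dead, whence $c_i\in\mathrm{Bad}(\nu_1)$ forces $c_i\in f_A(\nu_1,c_2|_{N_2})\cap C$ and $A$ is right. This contradicts $\HG_2^{C}$ being losing, proving the claim.

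With the claim established, feed $\HG_1$ the strategy in which the sages of $V_1\setminus\{A\}$ follow $f$ and $A$, seeing $\nu_1$, guesses $\mathrm{Bad}(\nu_1)$, padded by arbitrary colours to exactly $s$ guesses if necessary; this is legal because $g_1(A)=s$. As $\HG_1$ is losing it has a disproving placement: a colour $\delta$ for $A$ and a colouring $c_1$ of $V_1\setminus\{A\}$ with every sage of $V_1\setminus\{A\}$ wrong and, in particular, $\delta\notin\mathrm{Bad}(c_1|_{N_1})$. Unfolding the definition of $\mathrm{Bad}$ yields a colouring $c_2$ of $V_2\setminus\{A\}$ for which $(\delta,c_2)$ is $2$-dead and $\delta\notin f_A(c_1|_{N_1},c_2|_{N_2})$. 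Then the colouring of $V(\widetilde G)$ that assigns $\delta$ to $A$, $c_1$ to $V_1\setminus\{A\}$ and $c_2$ to $V_2\setminus\{A\}$ is disproving for $\HG$: the sages of $V_1\setminus\{A\}$ are wrong by the choice of $c_1$; those of $V_2\setminus\{A\}$ are wrong because $(\delta,c_2)$ is $2$-dead; and $A$, whose full view is precisely $(c_1|_{N_1},c_2|_{N_2})$, is wrong because $\delta\notin f_A(c_1|_{N_1},c_2|_{N_2})$. Hence $\HG$ is losing.

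I expect the genuinely delicate point to be the bound $|\mathrm{Bad}(\nu_1)|\le s$: one must notice that the $s+1$ offending colours, whatever they are — they need not lie in $\{0,\dots,s\}$, which is exactly where naive arguments break when $h_1(A)>s+1$ — can be relabelled onto the colour set of the strong vertex $A$ of $\HG_2$, and that the sages of $V_2\setminus\{A\}$ are oblivious to this relabelling. Everything else (legality of the fed strategies and the fact that all guess counts stay $\le s$ because $g_1(A)=g_2(A)=g(A)=s$) is routine bookkeeping, and the half‑edge‑removal machinery of the previous subsection is not needed here.
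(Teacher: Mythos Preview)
Your proof is correct and follows essentially the same approach as the paper's, with the order in which the two losing hypotheses are invoked swapped: the paper first uses $\HG_1$ losing to extract a colouring $\alpha$ of $N_1$ and $s+1$ distinct $A$-colours each admitting a $V_1\setminus\{A\}$-disproving extension, then relabels these $s+1$ colours onto the strong palette to apply $\HG_2$ losing; you instead first bound $|\mathrm{Bad}(\nu_1)|\le s$ via the same relabelling and $\HG_2$ losing, and only then invoke $\HG_1$ losing. The key device --- identifying any $s+1$ colours of $A$ with the palette $[h_2(A)]$ of the strong vertex --- is identical in both arguments.
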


\begin{proof}
Assuming the contrary let $f$ be a winning strategy in game $\HG$. Denote by $N_1$ the set of neighbours of vertex $A$ in graph $G_1$. For any hats placement $\varphi$ on $V(G_1)$ the guesses of all the sages in $V(G_1)\setminus\{ A\}$ are determined by strategy $f$. We will show that there exist $s+1$ hats placements $\varphi_i$ $(i=1,\dots,s+1)$ on~$V(G_1)$, such that for $i\ne j$
$$
\varphi_i\big|_{N_1}=\varphi_j\big|_{N_1}, \qquad  \varphi_i(A)\neq\varphi_j(A), 
$$
and such that if the sages from $G_1$ play according strategy $f$, then for all these placements none of the sages from  $V(G_1)\setminus\{ A\}$ guesses correctly.

For each hats placement $\alpha$ on vertices of $N_1$ denote by $C(\alpha)$ the set of hat colors of sage $A$, such that for all placements $\beta$ on $G_1$, for which
$$
\beta \big|_{N_1} = \alpha, \qquad \beta(A)\in C(\alpha),
$$
none of the sages from set $V(G_1)\setminus\{ A\}$ guesses correctly by strategy $f$. Suppose that the statement from the previous paragraph does not hold. Then each set $C(\alpha)$ contains at most $s$ colors. Consider the following strategy for game $\HG_1$: let all the sages from~$G_1$, except $A$, play by strategy~$f$, and sage $A$ names the colors from set $C(\alpha)$ (supplementing them with arbitrary colors, if $C(\alpha)$ contains less than $s$ elements). This strategy is winning, because if nobody in $V(G_1)\setminus \{A\}$ has guessed correctly, then $A$'s color belongs to $C(\alpha)$, and he guesses correctly. Contradiction.

Consider these $s+1$ placements $\varphi_i$. Fix a hats placement $\alpha=\varphi_i\big|_{N_1}$ on $N_1$ and restrict ourselves to only those hats placements on $G_2$, where sage $A$ receives a hat of one of $s+1$ colors $\varphi_i(A)$,  $i=1$, \dots, $s+1$. Then the strategy $f$ defines the actions of the sages on the graph $G_2$, i.\,e.\ in the losing game $\HG_2$ subject with the only restriction that in the case $h_1(A)> s+1$ sage $A$ by this strategy name the colors from the set that contains more than $s+1$ colors, i.\,e.\ more than his hatness in game $G_2$. But the extra colors do not help to win. Therefore there exists disproving placement~$\psi$ on $G_2$. If $\psi(A)=\varphi_j(A)$, then $\psi\cup \varphi_j\big|_{V(G_1)\setminus A}$ is a disproving hats placement for strategy $f$ in game~$\HG$.
\end{proof}

We need one more lemma that demonstrates a specific property of strong vertices.

\begin{lemma}\label{lem:(s+1 s)vertex-removing}
  Let $\HG = \langle {G, h, g} \rangle$ be a game, $G' = G \setminus A$, where $A \in V(G)$ is a vertex connected with all other vertices of $G$, $h(A)=s+1$, $g(A)=s$, and  $\HG' =  \langle G', h', (s+1)\cdot g'\rangle$, where    $h'=h\Big\vert_{V(G')}$, $g'=g\Big\vert_{V(G')}$. Then the games $\HG$ and $\HG'$ are winning (or losing) simultaneously.
\end{lemma}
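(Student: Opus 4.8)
The statement is an equivalence, and I would establish its two implications by rather different arguments.

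\emph{If $\HG$ is winning then $\HG'$ is winning.} This direction is bookkeeping with the constructors already at hand. Since $A$ is adjacent to every other vertex, all directed edges $\vv{vA}$ with $v\in V(G')$ are present in $G$; I remove them one at a time by theorem~\ref{thm:half-edge-removal}. Each removal of $\vv{vA}$ multiplies $g(v)$ by $h(A)=s+1$ and changes nothing else, so after all removals we obtain a winning game $\HG''=\langle G'',h,g''\rangle$ in which $A$ has no incoming edges, $g''(A)=s$, and $g''(v)=(s+1)g'(v)$ for $v\in V(G')$, i.e. $\HG''[V(G')]=\HG'$. Now $\langle\{A\},V(G')\rangle$ is a cut of $G''$ with no directed edge from $V(G')$ to $A$, so by lemma~\ref{lem:scc} at least one of $\HG''[\{A\}]$ and $\HG''[V(G')]$ is winning; the former is a one-vertex game with hatness $s+1$ and $s$ guesses, hence losing, so $\HG'$ is winning.

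\emph{If $\HG'$ is winning then $\HG$ is winning.} Here I would build a strategy for $\HG$ directly from a winning strategy $f'$ of $\HG'$. For each $v\in V(G')$ split, in a way fixed in advance, the list of at most $(s+1)g'(v)$ colors that $f'$ prescribes to $v$ into $s+1$ blocks $P^v_0,\dots,P^v_s$, each of size at most $g'(v)=g(v)$; the blocks are determined by the colors $v$ sees in $G'$. In $\HG$ let every $v\in V(G')$ read off the color $c\in[s+1]$ of the hat of $A$ (which he sees, $A$ being adjacent to all) and announce the block $P^v_c$; and let $A$, who sees all of $V(G')$, look at the assignment $\psi$ on $V(G')$ and announce the set $D(\psi)=\{c\in[s+1]\colon \psi(v)\notin P^v_c\text{ for all }v\in V(G')\}$ of those colors of $A$ for which nobody in $V(G')$ guesses correctly. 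The crux is that $|D(\psi)|\le s$, which makes $D(\psi)$ a legal list of guesses for $A$ (who has $g(A)=s$ guesses): since $f'$ is winning, for $\psi$ there is some $w\in V(G')$ with $\psi(w)$ in its $f'$-list, hence in some block $P^w_{c^*}$, so $c^*\notin D(\psi)$ and $D(\psi)\subsetneq[s+1]$. The strategy then wins: for any assignment on $G$ with restriction $\psi$ to $V(G')$ and color $c_A$ on $A$, if $c_A\in D(\psi)$ then $A$ has guessed it, and otherwise some $v\in V(G')$ has $\psi(v)\in P^v_{c_A}$ by the definition of $D(\psi)$. Adjacency of $A$ to all other vertices is used exactly twice: each $v\in V(G')$ must see $A$ to know which block to play, and $A$ must see all of $V(G')$ to form $D(\psi)$.

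I expect the second implication to be the substantive part, the key points being the counting bound $|D(\psi)|\le s$ together with the observation that $A$'s announced set $D(\psi)$ exactly complements the blockwise play of the sages in $V(G')$; the first implication is a routine combination of theorem~\ref{thm:half-edge-removal} with lemma~\ref{lem:scc} and could alternatively be done directly, letting each $v\in V(G')$ output in $\HG'$ the union of its $\HG$-guesses over all $s+1$ possible colors of $A$.
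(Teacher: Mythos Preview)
Your argument is correct. The first implication is exactly the paper's: remove all half-edges $\vv{vA}$ via theorem~\ref{thm:half-edge-removal}, after which $A$ sits in its own strongly connected component with $s$ guesses against $s+1$ colors and no incoming information; the paper phrases the conclusion directly (``we can assign $A$'s color so that $A$ does not guess'') rather than citing lemma~\ref{lem:scc}, but this is cosmetic.

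For the second implication you take a genuinely different route. The paper invokes a constructor: it observes that the edge game \pathgame($\frac s{s+1}$,$\frac 1{s+1}$,$A$,$B$) is winning and applies corollary~\ref{cor:to-construct-petunia} to substitute $\HG'$ (with its $(s+1)g'$ guesses) in place of $B$, the common factor $s+1$ cancelling against $h'(B)=s+1$ to recover exactly $\HG$. Your direct construction---partition each sage's $\HG'$-list into $s+1$ blocks indexed by $A$'s colour, and let $A$ output the set $D(\psi)$ of colours for which no block hits---is in effect an explicit unpacking of what that constructor produces, but it stands on its own without the machinery of theorem~\ref{thm:CancelConstructor}. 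The paper's route is a one-line application once the constructor is in hand and advertises that machinery; your route is self-contained and makes the pigeonhole $|D(\psi)|\le s$ visible. Both rely on the same structural fact, namely that $A$ is universal, exactly where you note: each $v$ must see $A$ to select its block, and $A$ must see all of $V(G')$ to compute every sage's $\HG'$-list and hence $D(\psi)$.
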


\begin{proof}
  If $\HG$ is a winning game, we remove by theorem \ref{thm:half-edge-removal} all half-edges $\vv{vA}$ and obtain the winning game. In this game $A$ has $s$ guesses, $s+1$ colors, and no information, therefore we can assign $A$'s color so that $A$ does not guess. But now the remaining sages play the game $\HG'$. Hence $\HG'$ is winning.
  
  If the game $\HG'$ is winning, substitute $\HG'$  in the winning game   \pathgame($\frac s{s+1}$,$\frac 1{s+1}$,$A$,$B$) in place of vertex $B$ by corollary \ref{cor:to-construct-petunia}. We obtain a winning game $\HG$.
  \end{proof}

\subsection{Transfer of a hint along the new edge}

We will prove here one technical tool that is useful in analysis games on paths.

Let game $\HG=\langle G, h, g\rangle$ be winning under condition that the adversary makes the following hint during the game. For one vertex $B\in V(G)$ a natural number $w_B\leq h(B)$ is fixed and it is known that the adversary will come to the sage $B$ during the game and will tell him a set of $w_B$ consecutive remainders (i.\,e.~the set of remainders in the form $x$, $x+1$, \dots, ${x+{w_B}-1} \bmod h(B)$), containing the color of his hat; the other sages will not hear this hint. Vertex~$B$, number $w_B$ and the rule of proclaiming of the hint are known to the sages beforehand. Denote a game with hint by $\langle G, h, g, B, w_B\rangle$.

For example, the game $\langle G, h, g, B, w_B\rangle$ is certainly winning in the case $w_B\leq g(B)$.

\begin{theorem}\label{thm:hint_constructor}
Let graph $G$ contain vertex $B$, and graph $\tilde G$ be obtained from graph $G$ by appending new vertex $A$ and new edge $AB$. Let hatness function $\tilde h$ and guessing function~$\tilde g$ be given on graph $\tilde G$, and let $h=\tilde h\Big\vert_{V(G)}$, $g=\tilde g\Big\vert_{V(G)}$. Let for some natural numbers $w_A$, $w_B$ such that $g(A)\leq w_A\leq h(A)$ and $g(B)\leq w_B\leq h(B)$, the conditions hold:

\smallskip
{\rm (i)} game with hint $\HG=\langle G, h, g, B, w_B\rangle$ is winning,

\smallskip
{\rm (ii)} $w_B\cdot h(A)$ is divisible by $h(B)$,

\smallskip
{\rm (iii)} $w_A w_B\geq (w_A-g(A))h(B)$.

\smallskip\noindent
Then the game with hint $\tilde\HG=\langle \tilde G, \tilde h, \tilde g, A, w_A\rangle$ is winning.
\end{theorem}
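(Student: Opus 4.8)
The plan is to run the winning strategy of $\HG$ inside $\tilde\HG$, feeding sage $B$ an artificial hint manufactured from the colour he now sees at the new vertex $A$. In $\tilde\HG$ every sage of $G$ other than $B$ keeps playing his $\HG$-strategy (none of them is adjacent to $A$, so his situation is literally unchanged); sage $B$ reads the colour $c_A$ of $A$, forms from it a block $\theta(c_A)$ of $w_B$ consecutive residues modulo $h(B)$, and then plays his $\HG$-strategy pretending the adversary handed him $\theta(c_A)$ as the hint at $B$; and sage $A$, who sees $c_B$ and has been given a genuine window $W\ni c_A$ of size $w_A$, issues his $g(A)$ guesses. If $A$ guesses wrongly but $c_B\in\theta(c_A)$, then $\theta(c_A)$ was a legitimate hint at $B$ and the $\HG$-players win; so it remains to force that a failure of $A$ implies $c_B\in\theta(c_A)$. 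This is automatic if $A$, seeing $c_B$ and knowing $W$, simply guesses every $a\in W$ with $c_B\notin\theta(a)$ — provided that set has at most $g(A)$ elements.

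Thus the theorem reduces to a self-contained combinatorial claim: there is a map $\theta$ sending each $a\in[h(A)]$ to a cyclic interval of length $w_B$ in $\ZZ_{h(B)}$ such that, for every cyclic interval $W$ of length $w_A$ in $\ZZ_{h(A)}$ and every $b\in\ZZ_{h(B)}$,
$$
\bigl|\{a\in W:\ b\notin\theta(a)\}\bigr|\le g(A),
\qquad\text{equivalently}\qquad
\bigl|W\cap\theta^{-1}(b)\bigr|\ge w_A-g(A),
$$
where $\theta^{-1}(b)=\{a:b\in\theta(a)\}$. Since $\sum_{b\in\ZZ_{h(B)}}\bigl|W\cap\theta^{-1}(b)\bigr|=\sum_{a\in W}|\theta(a)|=w_A w_B$, the quantity $|W\cap\theta^{-1}(b)|$ averages (over $b$) to $w_Aw_B/h(B)$, which by hypothesis~(iii) is at least the integer $w_A-g(A)$. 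Hence it suffices to build $\theta$ so that, for each $W$, these $h(B)$ numbers are nearly balanced; each is then at least $\lfloor w_Aw_B/h(B)\rfloor\ge w_A-g(A)$.

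Constructing such a dictionary is the crux. The naive choice $\theta(a)=[a,a+w_B-1]\bmod h(B)$ is useless, because then $\theta^{-1}(b)$ is itself an interval and a short window $W$ can avoid it entirely; one needs the preimages $\theta^{-1}(b)$ to be as spread out as possible in $\ZZ_{h(A)}$ (a maximally balanced set of density $w_B/h(B)$ meets every length-$w_A$ window in at least $\lfloor w_Aw_B/h(B)\rfloor$ points). When $w_B\mid h(B)$ this is obtained by a ``block'' dictionary — partition $\ZZ_{h(B)}$ into $h(B)/w_B$ consecutive blocks and let $\theta(a)$ be the block with index $a\bmod(h(B)/w_B)$, so each $\theta^{-1}(b)$ is an arithmetic progression with step $h(B)/w_B$, of size exactly $w_Bh(A)/h(B)$; here one uses that $h(B)/w_B$ divides $h(A)$, which is precisely what~(ii) gives. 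In general one lets the intervals rotate (at a step coprime to $h(B)$ when possible, after dividing out $\gcd(w_B,h(B))$ otherwise, which~(ii) again guarantees divides $h(A)$) so that the preimages become nearly equally spaced, and the bound $\lfloor w_Aw_B/h(B)\rfloor\ge w_A-g(A)$ finishes the argument via the averaging remark.

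The routine parts are the strategy description and the reduction to the dictionary claim; everything hinges on the construction. The real obstacle is to produce a dictionary of genuine length-$w_B$ intervals whose preimages are spread out enough that no length-$w_A$ window of $\ZZ_{h(A)}$ misses more than $g(A)$ of their points, and to check that hypothesis~(iii) matches the worst case with no slack — this is where care with the floor functions, and with the case $h(B)\nmid h(A)$, cannot be avoided.
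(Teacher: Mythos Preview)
Your framework coincides with the paper's: sage $A$ names every $a$ in his hinted window $W$ with $c_B\notin\theta(a)$; sage $B$ treats $\theta(c_A)$ as an ersatz hint and runs his $\HG$-strategy; the rest of $G$ plays $\HG$ unchanged. The reduction to the dictionary claim is correct, and your block construction for the special case $w_B\mid h(B)$ is fine --- it is in fact the paper's construction specialised to that case.

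The gap, which you yourself flag, is the general construction and its verification; as written the proposal is a plan, not a proof. The paper's answer is simpler than your hints about ``coprime steps'' and ``dividing out $\gcd(w_B,h(B))$'' suggest: the very same rotation
\[
\theta(a)=\{aw_B,\ aw_B+1,\ \ldots,\ aw_B+w_B-1\}\pmod{h(B)}
\]
works for all parameters. Condition~(ii) is used for exactly one purpose: it says $h(A)\,w_B\equiv 0\pmod{h(B)}$, i.e.\ $\theta$ is $h(A)$-periodic, so a \emph{cyclic} window $W\subset\ZZ_{h(A)}$ of length $w_A$ may be replaced by an honest run $\{i_0,\ldots,i_0+w_A-1\}\subset\ZZ$. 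The count is then a one-liner: $b\in\theta(i)$ iff some multiple of $h(B)$ lies in $[\,iw_B-b,\ iw_B-b+w_B-1\,]$; as $i$ runs over $w_A$ consecutive integers these length-$w_B$ intervals abut with no gaps or overlaps and cover exactly $w_Aw_B$ consecutive integers, so $|W\cap\theta^{-1}(b)|$ is precisely the number of multiples of $h(B)$ in that stretch, namely $\lfloor w_Aw_B/h(B)\rfloor$ or $\lceil w_Aw_B/h(B)\rceil$. By~(iii) this is always $\ge w_A-g(A)$. Thus your ``nearly balanced'' heuristic is exactly right for this $\theta$, but it is established by a direct interval count rather than by the averaging remark, and no separate treatment of the non-divisible case is needed.
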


\emph{Proof}. To describe the strategy of sage $A$, construct table $h(A)\times h(B)$, in which some squares are empty, and the others contain letters ``$L$'' by the following rule. Number the rows of the table by numbers from $0$ to $h(A)-1$, we identify the numbers of rows  with possible colors of $A$'s hat. Number the columns of the table by numbers from $0$ to $h(B)-1$, i.e., by possible colors of $B$'s hat. For each $i$ ($0\leq i\leq h(A)-1$) we put letters ``$L$'' in the cells of $i$-th row in columns with numbers
\begin{equation}\label{eqn:table-AB}
i w_B, \quad  iw_B+1, \quad \ldots, \quad iw_B+w_B-1 \pmod{h(B)}
\end{equation}
(i.\,e. $w_B$ letters ``$L$'' in total), see\ fig.~\ref{fig:interaction-tableAB}. One may consider the obtained table as torus: calculations modulo $h(B)$ in rule \eqref{eqn:table-AB} allow one to identify $h(B)$-th column with zeroth column, and condition (ii) allows one to identify $h(A)$-th row with zeroth row.

\begin{figure}[h]%
\begin{center}
\quad\quad The colors of sage $B$

\vskip7mm

\quad
\raise50pt\vbox{\hsize =2cm\noindent The\\ colors\\ of\\ sage\\ $A$}
\qquad
\vbox{\cellsize=12pt\footnotesize
\cput(0,1){0}\cput(0,2){1}\cput(0,3){2}\cput(0,5){$w_B$}\cput(0,9){$\dots$}\cput(0,14.5){$h(B) - 1$}
\cput(1,0){0}\cput(2,0){1}\cput(3,0){2}\cput(4.5,-0.5){$\dots$}\cput(6,-.3){$w_A$} \cput(10,-0.5){$\dots$} \cput(14,-1.3){$h(A) - 1$}
\cput(1,1){$L$}\cput(1,2){$L$}\cput(1,3){$L$}\cput(1,4){$L$}\cput(1,5){$L$}
\cput(2,7){$L$}\cput(2,8){$L$}\cput(2,9){$L$}\cput(2,10){$L$}\cput(2,6){$L$}
\cput(3,1){$L$}\cput(3,11){$L$}\cput(3,12){$L$}\cput(3,13){$L$}\cput(3,14){$L$}
\cput(4,2){$L$}\cput(4,3){$L$}\cput(4,4){$L$}\cput(4,5){$L$}\cput(4,6){$L$}
\cput(5,7){$L$}\cput(5,8){$L$}\cput(5,9){$L$}\cput(5,10){$L$}\cput(5,11){$L$}
\cput(6,12){$L$}\cput(6,13){$L$}\cput(6,14){$L$}\cput(6,1){$L$}\cput(6,2){$L$}
\cput(7,3){$L$}\cput(7,4){$L$}\cput(7,5){$L$}\cput(7,6){$L$}\cput(7,7){$L$}
\cput(8,8){$L$}\cput(8,9){$L$}\cput(8,10){$L$}\cput(8,11){$L$}\cput(8,12){$L$}
\cput(9,13){$L$}\cput(9,14){$L$}\cput(9,1){$L$}\cput(9,2){$L$}\cput(9,3){$L$}
\cput(10,4){$L$}\cput(10,5){$L$}\cput(10,6){$L$}\cput(10,7){$L$}\cput(10,8){$L$}
\cput(11,9){$L$}\cput(11,10){$L$}\cput(11,11){$L$}\cput(11,12){$L$}\cput(11,13){$L$}
\cput(12,14){$L$}\cput(12,1){$L$}\cput(12,2){$L$}\cput(12,3){$L$}\cput(12,4){$L$}
\cput(13,5){$L$}\cput(13,6){$L$}\cput(13,7){$L$}\cput(13,8){$L$}\cput(13,9){$L$}
\cput(14,10){$L$}\cput(14,11){$L$}\cput(14,12){$L$}\cput(14,13){$L$}\cput(14,14){$L$}
\cells{
 _ _ _ _ _ _ _ _ _ _ _ _ _ _
|_:_:_:_:_|_:_:_:_:_:.:.:.:.|
|_:.:.:.:.|_:_:_:_:_|_:_:_:_|
|_|_:_:_:_:_:.:.:.:.|_:_:_:_|
|.|_:_:_:_:_|_:_:_:_:_:.:.:.|
|_:_:.:.:.:.|_:_:_:_:_|_:_:_|
|_:_|_:_:_:_:_:.:.:.:.|_:_:_|
|.:.|_:_:_:_:_|_:_:_:_:_:.:.|
|_:_:_:.:.:.:.|_:_:_:_:_|_:_|
|_:_:_|_:_:_:_:_:.:.:.:.|_:_|
|.:.:.|_:_:_:_:_|_:_:_:_:_:.|
|_:_:_:_:.:.:.:.|_:_:_:_:_|_|
|_:_:_:_|_:_:_:_:_:.:.:.:.|_|
|.:.:.:.|_:_:_:_:_|_:_:_:_:_|
|_:_:_:_:_:_:_:_:_|_:_:_:_:_|}}\qquad\qquad\qquad\qquad\qquad
\end{center}
\caption{The strategy of sage $A$. Here $h(A)=14$, $h(B)=14$, $w_A=6$, $w_B=5$. In the construction of the table it is not required, but to complete the picture one can assume that $g(A)=4$, $g(B)=4$.}\label{fig:interaction-tableAB}%
\end{figure}

\begin{lemma}
Consider arbitrary $w_A$ consecutive rows of this table (taking into account its toric nature, i.\,e.\ one can take several lower rows and the corresponding number of upper rows). Then each column of the table contains at most $g(A)$ empty cells in these rows.
\end{lemma}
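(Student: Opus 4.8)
The plan is to recast the claim about empty cells into a claim about ``$L$''-cells: among any $w_A$ consecutive rows, a fixed column has at most $g(A)$ empty cells if and only if it carries at least $w_A-g(A)$ letters ``$L$'' in those rows. So it suffices to prove the latter bound for every column.

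First I would describe the ``$L$''-region row by row. By rule \eqref{eqn:table-AB}, the ``$L$''-cells of row $i$ occupy the columns obtained by reducing the block of $w_B$ consecutive integers $[\,iw_B,\ iw_B+w_B-1\,]$ modulo $h(B)$. Condition (ii) forces $h(A)w_B\equiv 0\pmod{h(B)}$, which is exactly what makes this description compatible with the toric identification of row $h(A)$ with row $0$; consequently we may use any integer representatives for the row indices when we take $w_A$ consecutive rows ``around the torus''. The key observation is that the integer blocks of consecutive rows are \emph{adjacent}: the block of row $i+1$, namely $[\,(i+1)w_B,\ (i+1)w_B+w_B-1\,]$, begins precisely one step after the block of row $i$ ends. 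Hence, for the $w_A$ consecutive rows $r,r+1,\dots,r+w_A-1$, the union of their blocks is the single interval of consecutive integers $[\,rw_B,\ rw_B+w_Aw_B-1\,]$, of length $w_Aw_B$.

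Next I would count, for a fixed column $j\in\{0,\dots,h(B)-1\}$, how many of these $w_A$ rows have an ``$L$'' in column $j$. Since $w_B\le h(B)$, each row's block of $w_B$ consecutive integers contains at most one integer congruent to $j$ modulo $h(B)$; therefore the number of those rows having an ``$L$'' in column $j$ equals the number of integers in $[\,rw_B,\ rw_B+w_Aw_B-1\,]$ that are $\equiv j\pmod{h(B)}$. An interval of $w_Aw_B$ consecutive integers meets every residue class modulo $h(B)$ at least $\lfloor w_Aw_B/h(B)\rfloor$ times, so column $j$ carries at least $\lfloor w_Aw_B/h(B)\rfloor$ letters ``$L$'' among the $w_A$ rows. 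Finally, condition (iii) reads $w_Aw_B\ge (w_A-g(A))h(B)$, i.e.\ $w_Aw_B/h(B)\ge w_A-g(A)$, and since $w_A-g(A)$ is an integer this yields $\lfloor w_Aw_B/h(B)\rfloor\ge w_A-g(A)$, which is exactly the bound we wanted; equivalently, column $j$ has at most $w_A-(w_A-g(A))=g(A)$ empty cells.

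The only genuine idea is the ``telescoping'' of the per-row windows into a single arc of length $w_Aw_B$ wrapping around the $h(B)$ columns; after that the argument is elementary counting with floor functions, and hypotheses (ii) and (iii) enter exactly where indicated — (ii) for the consistency of the torus so that integer representatives of row indices may be used, and (iii) for the final numerical inequality. I do not expect any serious obstacle beyond being careful that blocks of length $w_B\le h(B)$ hit each residue at most once, which is what converts the row count into an interval-of-integers count.
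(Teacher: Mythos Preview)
Your proposal is correct and follows essentially the same approach as the paper: both arguments reduce the claim to showing that a fixed column $j$ contains at least $w_A-g(A)$ letters ``$L$'' among the $w_A$ chosen rows, and both do this by counting how many of the $w_A$ values $iw_B$ (or equivalently $j-iw_B$) land in the right residue window modulo $h(B)$, invoking condition~(iii) for the final inequality. Your phrasing via the telescoped interval $[rw_B,\,rw_B+w_Aw_B-1]$ and the floor bound $\lfloor w_Aw_B/h(B)\rfloor$ is a clean repackaging of the paper's span computation $(w_A-1)w_B\ge (w_A-g(A))h(B)-w_B$; the paper additionally simplifies by first reducing to the case $r=0$ via the toric symmetry, whereas you carry a general $r$ throughout, but the substance is the same.
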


\begin{proof} In view of toriс nature of the table it is sufficient to verify this statement for the set of first $w_A$ rows. Consider $j$-th column. It is evident that this column contains letter ``$L$'' in the entry at $i$-th row ($0\leq i\leq w_A-1$) if and only if
\begin{equation}\label{eqn:l-letters}
0\leq (j-iw_B) \bmod h(B)\leq w_B-1.
\end{equation}
In the integer sequence $d_i(j)=j-iw_B$ the distance between $d_0(j)$ and $d_{w_A-1}(j)$ is equal to
$$
(w_A-1)w_B.
$$
By condition (iii) the inequality holds:
$$
(w_A-1)w_B \geq (w_A-g(A) )h(B) -w_B,
$$
which means that for each $j$ inequality \eqref{eqn:l-letters} has at least $w_A-g(A)$ solutions for variable $i$, i.\,e.\ each column of the table contains at least $w_A-g(A)$ letters~``$L$'' in the chosen $w_A$ rows. Thus, it contains at most $g(A)$ empty squares.
\end{proof}

The hint that sage $A$ receives from the adversary is actually a set of $w_A$ consecutive rows of the table. Then the strategy of sage~$A$ is to name the colors, corresponding to the numbers of rows with empty cells in the $j$-th column of the table, where $j$ is the color of $B$'s hat. Sage $A$ can do it, because by the lemma the rows in the adversary's hint contain at most $g(A)$ empty cells in $j$-th column.

Describe the strategy of sage $B$. He sees color $i$ of the hat of sage $A$ and concludes that $A$ does not guess correctly only in the cases, when $B$'s color corresponds to the columns containing letter ``$L$'' in~\hbox{$i$-th} row. Therefore, $B$ may think that his own color is given by the set of these $w_B$ columns, and, receiving this hint, he plays with this hint by the strategy for graph~$G$.

The theorem is proven.

\section{Hat guessing numbers of some classes of graphs}
\label{sec:hgn-of-classes}

\subsection{Paths}

Blažej et al. using their theory of fractional hat guessing numbers proved \cite[Proposition 18]{blazej_bears_2021} that for each $\varepsilon>0$ there exists $N$ such that for each $n>N$ we can choose functions $h$ and $g$ on $P_n$ for which $4-\varepsilon < \min_{v\in V(P_n)} h(v)/g(v) < 4$ and game  $\langle P_n, h, g\rangle$ is winning. We confirm this result here by the exact evaluation of $\hgn_s(P_n)$ for large $n$, more precisely, we will prove that 
 game $\langle P_n, \cnst{(4s-2)}, \cnst{s} \rangle$ is winning for $n\ge 2s$,
and game $\langle P_n, \cnst{(4s-1)}, \cnst{s} \rangle$ is losing for all $n$.

Let us define hatness function on $V(P_s)=\{v_1, \dots, v_s\}$ as follows
$$
h(v_i)=\begin{cases}4s-2& \text{for } 1\leq i<s, \\ 2s-1 & \text{for } i=s.  \end{cases}
$$

\begin{theorem}\label{thm-path-P_s}
The game $\langle P_s, h, \cnst s\rangle$ is winning.
\end{theorem}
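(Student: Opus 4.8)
The plan is to prove the statement by induction on $s$, peeling off the last vertex $v_s$ of the path and using the half-edge-removal machinery together with the clique-win criterion (Theorem~\ref{thm:clique-win}) and the hint-transfer constructor (Theorem~\ref{thm:hint_constructor}). The base case $s=1$ is the game $\langle P_1, \cnst{2}, \cnst{1}\rangle$: a single sage with $2$ colors and $1$ guess, which is trivially winning. For the inductive step I would look at $P_s$ as $P_{s-1}$ (on $v_1,\dots,v_{s-1}$) with the extra vertex $v_s$ attached to $v_{s-1}$ by the edge $v_{s-1}v_s$. The hatnesses are chosen so that $v_{s-1}$ has hatness $4s-2$ in $P_s$ but needs to be treated as having hatness $4(s-1)-2 = 4s-6$ in the smaller game — the ``extra room'' at $v_{s-1}$ will absorb a hint coming from $v_s$.

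The key step is to set up a \emph{game with hint} on $P_{s-1}$: by the inductive hypothesis $\langle P_{s-1}, h', \cnst{s-1}\rangle$ is winning, where $h'$ is the hatness function for $P_{s-1}$ (so $h'(v_{s-1}) = 4s-6$). I want to show that the game with hint $\langle P_{s-1}, h'', \cnst{s-1}, v_{s-1}, w\rangle$ is winning where $h''(v_{s-1}) = 4s-2$ is the larger hatness and $w$ is an appropriate window size — this should follow because a hint of $w = 4s-6$ consecutive remainders out of $4s-2$ possible colors effectively reduces $v_{s-1}$'s uncertainty back to the inductive situation (the sage $v_{s-1}$ can run his $P_{s-1}$-strategy inside the window). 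Then I apply Theorem~\ref{thm:hint_constructor} with $G = P_{s-1}$, $B = v_{s-1}$, $A = v_s$, transferring the hint along the new edge $v_sv_{s-1}$; I must check conditions (i)–(iii): (i) is the hinted game just described, (ii) is a divisibility condition relating $w_{v_{s-1}}\cdot h(v_s)$ and $h(v_{s-1}) = 4s-2$, and (iii) is the inequality $w_{v_s}w_{v_{s-1}} \ge (w_{v_s} - s)h(v_{s-1})$ with $h(v_s) = 2s-1$. The point is that $v_s$ has hatness only $2s-1$ (half of $4s-2$), which is exactly what makes the divisibility and the arithmetic inequality work out with $w_{v_s} = 2s-1$.

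I expect the main obstacle to be the bookkeeping in condition~(i): verifying that enlarging the hatness of $v_{s-1}$ from $4s-6$ to $4s-2$ while simultaneously granting a hint of the right window size still leaves a winning strategy. One has to be careful that the hint's window, combined with the visibility $v_{s-1}$ already has from $v_{s-2}$, genuinely reduces to the inductive game and not to something weaker; this is where the ``consecutive remainders'' structure of the hint matters, and it may require re-running the lemma inside Theorem~\ref{thm:hint_constructor}'s proof rather than citing it as a black box. A secondary nuisance will be the boundary arithmetic when $s=2$ (so $P_{s-1}=P_1$ is a single vertex with hatness $2s-1 = 3$), which should be checked separately against Theorem~\ref{thm:clique-win} applied to the edge $v_1v_2$ with fractions $\tfrac{?}{3}$ and $\tfrac{?}{3}$. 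Once the hint-transfer step is in place, the induction closes immediately.
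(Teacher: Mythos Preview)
Your induction is on the wrong parameter, and the plan does not close.

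First, the base case: for $s=1$ the single vertex $v_1=v_s$ has hatness $2s-1=1$, not $2$; and a lone sage with $2$ colours and $1$ guess is \emph{losing}, not trivially winning.

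More seriously, the inductive step cannot work as stated. The inductive hypothesis (the theorem for parameter $s-1$) gives you a winning game $\langle P_{s-1},h',\cnst{(s-1)}\rangle$ in which every sage has only $s-1$ guesses, the interior vertices $v_1,\dots,v_{s-2}$ have hatness $4(s-1)-2=4s-6$, and the last vertex $v_{s-1}$ has hatness $2(s-1)-1=2s-3$ (not $4s-6$ as you write). The target game $\langle P_s,h,\cnst s\rangle$ has $s$ guesses everywhere and hatness $4s-2$ on $v_1,\dots,v_{s-1}$. Your single application of Theorem~\ref{thm:hint_constructor} at the edge $v_{s-1}v_s$ can only adjust the situation at $v_{s-1}$: it does nothing to raise the hatnesses of $v_1,\dots,v_{s-2}$ from $4s-6$ to $4s-2$, and it does nothing to raise the guessing function from $s-1$ to $s$. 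Theorem~\ref{thm:hint_constructor} keeps $h$ and $g$ unchanged on $V(G)$; it does not let you trade an extra guess for four extra colours at every interior vertex simultaneously. So after your step you would at best know that $\langle P_s,h'',\cnst{(s-1)}\rangle$ is winning for some $h''$ with $h''(v_i)=4s-6$ on $v_1,\dots,v_{s-2}$ --- a weaker statement than the one you need, and not one from which the target follows.

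The paper's argument fixes $s$ throughout and inducts instead on the length $k$ of the path, $1\le k\le s$, proving that the hinted game $\langle P_k,\,h,\,\cnst s,\,v_k,\,s+k-1\rangle$ is winning. The base $k=1$ is immediate because the hint window $s$ equals $g(v_1)$. Each step $k\to k+1$ is one application of Theorem~\ref{thm:hint_constructor} with $B=v_k$, $A=v_{k+1}$, $w_B=s+k-1$, $w_A=s+k$; condition~(iii) reduces to $(s-k)^2\ge s-k$. At $k=s$ the window $2s-1$ equals $h(v_s)$, so the hint is vacuous and the unhinted game is winning. The point is that the hatness function and the guessing function stay fixed during the whole induction; only the hint window grows.
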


\begin{proof}
For $k=1$, $2$, \dots, $s-1$ denote by $P_k$ a path on vertices $v_1$, \dots, $v_k$ (it is a subgraph of $P_{s}$). Function $h$ allows us to define the hatnesses of vertices $v_1$, \dots, $v_k$. Check by induction on $k$ ($1\leq k\leq s$) that game with hint  $\langle P_k,\, h, \,\cnst s, \,v_k, \,s+k-1\rangle$ is winning (recall that in this game the adversary pointed to the sage $v_k$ range of $s+k-1$ consecutive colors containing the color of his hat).

Base case $k=1$: in game $\langle P_1, h, \cnst s, v_1, s \rangle$ the only player $v_1$ wins due to a hint.

Inductive step $k\to k+1$, where $k\leq s-2$. Let game with hint $\langle P_k, \, h, \, \cnst s, \, v_k, \, {s+k-1}\rangle$ be winning. Then game $\langle P_{k+1}, \,h, \,\cnst s, \,v_{k+1}, \,s+k\rangle$ is winning by theorem~\ref{thm:hint_constructor} too: here 
\begin{align*}
 B&=v_k,     &  w_B&=s+k-1, & \HG&=\langle P_k, \ h, \ \cnst s, \ v_k, \ s+k-1\rangle,\\ A&=v_{k+1}, &  w_A&=s+k,   & \tilde \HG &=\langle P_{k+1}, \ h, \ \cnst s, \ v_{k+1}, \ s+k\rangle.   
\end{align*}
Condition ii) of theorem holds because $h(A)=h(B)$, and the condition iii) is provided by the inequality
$$
w_A w_B  =  (s+k-1)(s+k) \underset{(*)}{\geq} k(4s-2) = (w_A-g(A))h(B),
$$
where inequality $(*)$ is reduced to evident inequality $(s-k)^2\geq s-k$.

The last step $k=s-1 \to s$ also holds by theorem~\ref{thm:hint_constructor}. It is verified similarly with the only difference that condition ii) holds because the number $w_B = 2s-2$ is even, and therefore $w_B\cdot h(A)= (2s-2)(2s-1)$ is divisible by $h(B)=4s-2$.

Thus, we have proved that game with hint $\langle P_s,\, h,\, \cnst s,\, v_s,\, 2s-1\rangle$ is winning. But then the game $\langle P_s, h, \cnst s\rangle$ is evidently also winning.
\end{proof}

\begin{corollary}\label{cor:s-path}
The game $\HG=\langle P_{2s}, \cnst{(4s-2)}, \cnst s\rangle$ is winning.
\end{corollary}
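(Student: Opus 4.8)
The plan is to build the winning game $\langle P_{2s}, \cnst{(4s-2)}, \cnst s\rangle$ out of two copies of the path $P_s$ with the carefully chosen hatness function $h$ from Theorem~\ref{thm-path-P_s}, glued together along their ``short'' endpoints $v_s$ using the product constructor of Corollary~\ref{thm:multiplication}. Concretely, take $P_s$ with vertices $v_1, \dots, v_s$ where $h(v_s) = 2s-1$ and $h(v_i) = 4s-2$ otherwise, and a second disjoint copy $P'_s$ with vertices $v'_s, v'_{s-1}, \dots, v'_1$ (reversed so that $v'_s$ is the short endpoint), with the same hatness data. Identify $v_s$ with $v'_s$ to form a path on $2s$ vertices.

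The key step is to check that the product game $\HG \gtimes{v_s} \HG'$ has the right parameters. Both $\langle P_s, h, \cnst s\rangle$ and its mirror image are winning by Theorem~\ref{thm-path-P_s}, so by Corollary~\ref{thm:multiplication} the product game on the glued path $P_{2s}$ is winning, with guessing function constantly $s \cdot s = s^2$ at the glued vertex and $s$ elsewhere, and hatness $4s-2$ at every non-glued vertex but $(2s-1)(2s-1) = (2s-1)^2$ at the glued vertex. This is not yet the game we want, because the glued vertex has hatness $(2s-1)^2$ rather than $4s-2$ and guessing number $s^2$ rather than $s$. So the remaining work is to compare $(2s-1)^2$ with $4s-2$: since $(2s-1)^2 \ge s^2 \cdot (4s-2)$ fails for $s \ge 2$ while $(2s-1)^2 \le s^2(4s-2)$ does hold (indeed $(2s-1)^2 = 4s^2 - 4s + 1 \le 4s^3 - 2s^2$ for $s\ge 1$), we cannot simply rescale; instead I would pass to the honest constant game by noting that any winning game stays winning if we raise hatnesses and lower guesses in the allowed ratio, and pick a clean reduction. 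Actually the cleanest route is: since $s^2 / (2s-1)^2 \ge s/(4s-2)$ is equivalent to $s(4s-2) \ge (2s-1)^2$, i.e. $4s^2-2s \ge 4s^2-4s+1$, i.e. $2s \ge 1$, which holds, we may lower the guessing number at the glued vertex from $s^2$ to $s$ and simultaneously raise its hatness from $(2s-1)^2$ to $4s-2$ while keeping the game winning (a winning strategy with more colors allowed and fewer guesses required at one vertex is obtained by restricting, and the ratio inequality guarantees feasibility via the monotonicity already implicit in the constructors); at all other vertices the parameters are already $(4s-2, s)$.

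The main obstacle I expect is making the last rescaling step rigorous with the tools actually available in the paper, since ``raise hatness, lower guesses'' is intuitively clear but needs a cited monotonicity statement. The honest fix is to observe that the product game's glued vertex has $\bigl(\widetilde h, \widetilde g\bigr) = \bigl((2s-1)^2, s^2\bigr)$, and we want $(4s-2, s)$; because $s \cdot (4s-2) \ge s^2 \cdot \frac{4s-2}{s} $ — more simply, because we only need a strategy for the constant game and a winning strategy for a vertex with hatness $(2s-1)^2$ and $s^2$ guesses yields one for hatness $4s-2 \le (2s-1)^2$ and $s \ge s^2 \cdot (4s-2)/(2s-1)^2$ guesses by grouping colors into $4s-2$ blocks — one checks $s \ge \lceil s^2(4s-2)/(2s-1)^2 \rceil$, and $s^2(4s-2)/(2s-1)^2 \le s$ iff $s(4s-2) \le (2s-1)^2$, which is false; so in fact $s^2(4s-2)/(2s-1)^2 > s$, meaning this grouping does not directly work and one instead restricts the color set: a winning strategy with $(2s-1)^2$ colors restricts to one with any $4s-2$ of them, and with $s^2$ guesses that is certainly $\ge s$ guesses, so it is a fortiori winning with hatness $4s-2$ and guessing number $s^2$; finally since $4s - 2 \le (2s-1)^2$ and we may always discard surplus guesses only if the ratio permits — here we simply keep $s^2 \ge s$ guesses, which is allowed, giving a winning game with constant hatness $4s-2$ but guessing number $s$ everywhere except $s^2$ at one vertex, and then one last half-edge-removal-type argument or a direct strategy truncation reduces that vertex's guesses to $s$. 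I would state the corollary's proof as: apply Corollary~\ref{thm:multiplication} to two mirror copies of Theorem~\ref{thm-path-P_s}, then restrict the glued vertex's palette to $4s-2$ colors and observe its guessing number may be taken to be $s$ because the one-vertex subgame inequality $s \cdot (4s-2) \ge (2s-1)^2 \ge 4s-2$ shows $s$ guesses suffice on $4s-2$ colors for that vertex in isolation, whence $\langle P_{2s}, \cnst{(4s-2)}, \cnst s\rangle$ is winning.
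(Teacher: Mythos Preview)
Your overall plan --- take two copies of the winning game $\langle P_s, h, \cnst s\rangle$ from Theorem~\ref{thm-path-P_s} and glue them at the short endpoint via Corollary~\ref{thm:multiplication} --- is the right idea, but the execution has a genuine gap, and the long paragraph where you try several rescalings never closes it.

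First a minor point: identifying $v_s$ with $v'_s$ produces $P_{2s-1}$, not $P_{2s}$. More importantly, the product at the glued vertex gives $(\widetilde h,\widetilde g)=\bigl((2s-1)^2,\,s^2\bigr)$, and you need $(4s-2,\,s)$. Restricting the palette from $(2s-1)^2$ to $4s-2$ colors is fine (the adversary has fewer options), but then you are left with $(4s-2,\,s^2)$. Reducing the number of guesses from $s^2$ down to $s$ makes the game \emph{harder} for the sages, not easier; it is the wrong monotonicity direction, and neither half-edge removal (which only \emph{increases} guesses) nor ``strategy truncation'' (which can only lose correct guesses) helps. Your final sentence invokes a ``one-vertex subgame inequality'' $s(4s-2)\ge(2s-1)^2$, but that inequality, while true, says nothing about whether the glued vertex can still guarantee a correct guess with only $s$ attempts; the single-vertex game $\langle P_1,\cnst{(4s-2)},\cnst s\rangle$ is losing.

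The paper's proof avoids this entirely by inserting a two-vertex bridge: take the trivially winning game on an edge with $(h,g)=(2,1)$ at each endpoint, and form
\[
\HG(A)\ \gtimes{A}\ \langle P_2,\cnst2,\cnst1\rangle\ \gtimes{B}\ \HG(B),
\]
where $A$, $B$ are the endpoints of the bridge and also the ``short'' endpoints (hatness $2s-1$) of the two copies of $P_s$. The product at $A$ (and symmetrically at $B$) then gives exactly $\bigl((2s-1)\cdot 2,\ s\cdot 1\bigr)=(4s-2,\,s)$, and the resulting path has $s+s=2s$ vertices. No rescaling is needed.
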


\begin{proof}
Denote by $\HG(A)=\langle P_s, h, \cnst s\rangle$ the game from theorem \ref{thm-path-P_s}, where $A\in V(P_s)$ is the vertex for which $h(A)=2s-1$.
Then by corollary \ref{thm:multiplication} 
$$
\HG= \HG(A)\gtimes{A} \text{\pathgame($\tfrac 1{2}$,$\tfrac 1{2}$,$A$,$B$)} \gtimes{B}\HG(B)
$$ 
is winning.
\end{proof}

\begin{theorem}\label{thm: path:s-lose}
The game $\langle P_n, \cnst{(4s-1)}, \cnst{s} \rangle$ is losing for all $n$.
\end{theorem}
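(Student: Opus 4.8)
The statement to prove is that $\langle P_n, \cnst{(4s-1)}, \cnst{s}\rangle$ is losing for all $n$. The plan is to show the stronger fact that \emph{any} game $\langle P_n, h, \cnst s\rangle$ with $h \equiv \cnst{(4s-1)}$ is losing by exhibiting a strategy for the adversary, and the natural tool for a path is induction along the vertices combined with the half-edge removal machinery (Theorem~\ref{thm:half-edge-removal} / Corollary~\ref{cor:half-edge-removing}), which is precisely designed to break a connected graph into strongly connected components. More concretely, I would orient the path and strip off the ``backward'' half-edges one vertex at a time, tracking how the guessing numbers inflate, until the game decomposes into isolated edges; then invoke Theorem~\ref{thm:clique-win} (the losing direction of the $K_2$ criterion) on each edge.

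\textbf{Key steps.} First, set up the induction: label the vertices of $P_n$ as $v_1, \dots, v_n$ in path order, and consider ``peeling'' from one end. The cleanest route is probably to prove by induction on $n$ that the game with a hint $\langle P_n, \cnst{(4s-1)}, \cnst s, v_n, w\rangle$ is losing whenever $w$ is not too large — mirroring the structure of Theorem~\ref{thm-path-P_s} but with inequalities reversed — since a hint of $w$ consecutive colors at the last vertex is the ``dual'' of the winning construction, and a bare losing game is then the special case $w = h(v_n) = 4s-1$. The base case $P_1$ with a hint of $w \le 4s-1$ colors and $s$ guesses: the adversary gives a hint-set of size $w$ and then, since $w > s$ (as $4s-1 > s$ for $s \ge 1$... one must be careful about which $w$ appear), can pick a color in the hint-set that $v_1$ did not name. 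The inductive step appends $v_{n+1}$ adjacent to $v_n$; the sage at $v_n$ now sees $v_{n+1}$, and I would argue that from the perspective of $v_1, \dots, v_n$ the worst the adversary must concede is that $v_n$ effectively receives a hint of some width $w'$, and the recursion $w \mapsto w'$ tracks through exactly the divisibility and counting conditions (ii), (iii) of Theorem~\ref{thm:hint_constructor} read as \emph{obstructions} rather than \emph{enablers}. The arithmetic to check will be an inequality of the shape $w\cdot(4s-1) < (\text{something})$, analogous to the $(s-k)^2 \ge s-k$ check in Theorem~\ref{thm-path-P_s} but strict and in the other direction — the key numerical fact being that $4s-1$ is odd and $4s-2 = 2(2s-1)$, so the parity/divisibility slack that made the winning construction work at exactly $4s-2$ fails at $4s-1$.

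\textbf{Alternative (cleaner) route.} Rather than re-deriving a hint-induction, I would actually prefer to use Corollary~\ref{cor:half-edge-removing} directly: orient every edge of $P_n$ to point ``rightward'', i.e.\ keep only the half-edges $\vv{v_iv_{i+1}}$, adding the reverse half-edges $\vv{v_{i+1}v_i}$ would be the wrong direction, so instead start from the claim that $\langle P_n, \cnst{(4s-1)}, \cnst s\rangle$ is losing and peel: if it were \emph{winning}, apply Theorem~\ref{thm:half-edge-removal} to remove each $\vv{v_{i+1}v_i}$, which multiplies $g(v_{i+1})$ by $h(v_i) = 4s-1$. After removing all left-pointing half-edges the graph is a disjoint union of the single right-pointing edges $v_i \to v_{i+1}$, and on each such edge $v_i$ has $g = s \cdot (4s-1)^{?}$ while $v_{i+1}$ keeps... — this inflates $g$ too much and won't immediately give a losing edge, so the honest approach is the hint-induction above, and I would lean on Theorem~\ref{thm:hint_constructor}'s conditions reversed. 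Either way, the proof should end: each resulting component is an edge whose $g/h$ ratios sum to less than $1$, hence losing by Theorem~\ref{thm:clique-win}, hence the original game is losing.

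\textbf{Main obstacle.} The crux is the quantitative step — identifying, in the hint-recursion along the path, the exact width function $w_k$ and verifying that the counting inequality (the analogue of $(*)$ in Theorem~\ref{thm-path-P_s}) is \emph{violated} for hatness $4s-1$ for every $k$ and every $n$, which is where the oddness of $4s-1$ versus the $2 \mid 2s-1$-type divisibility must be exploited; getting the induction invariant stated so that it neither weakens to something false nor strengthens to something unprovable is the delicate part, everything else (base case, the final appeal to Theorem~\ref{thm:clique-win}) is routine.
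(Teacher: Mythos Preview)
Your framework is right---induction along the path, tracking a ``width'' at the boundary vertex---but the proposal stops short of a proof because you never identify the invariant or carry out the inductive step. The mechanisms you reach for (reversing conditions (ii) and (iii) of Theorem~\ref{thm:hint_constructor}, exploiting divisibility slack at $4s-1$ versus $4s-2$) are red herrings: nothing in the losing argument uses the hint-constructor machinery or any divisibility condition. And as you yourself note, the half-edge removal route blows up the guessing numbers and cannot close.

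The paper's proof makes the invariant explicit and the step elementary. It strengthens the claim: the game on $P_n$ is losing even when the leftmost vertex $v_1$ has hatness only $2s$ (all others $4s-1$, everyone with $s$ guesses). The inductive step is a single pigeonhole count. Let $A=v_1$, $B=v_2$. As $B$'s colour ranges over $[4s-1]$, sage $A$ names $s(4s-1)$ colours in total from a palette of size $2s$, so some colour $c_A$ is named at most $\lfloor s(4s-1)/(2s)\rfloor = 2s-1$ times. The adversary gives $A$ colour $c_A$; then $B$'s colour is confined to a set $C_B$ of size $(4s-1)-(2s-1)=2s$, and the remaining game on $v_2,\dots,v_n$ is exactly the inductive hypothesis with $B$ as the new boundary vertex of hatness $2s$. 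The point you were missing is that the boundary hatness $2s$ is a \emph{fixed point} of this reduction; the base case is then the edge with ratios $\frac{s}{2s}+\frac{s}{4s-1}<1$, losing by Theorem~\ref{thm:clique-win}. The only place ``$4s-1$ is odd'' enters is in the floor $\lfloor(4s-1)/2\rfloor=2s-1$.
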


\begin{proof}
Let
$$
h(v_i)=\begin{cases}4s-1& \text{for } 2\leq i\leq n, \\ 2s & \text{for } i=1.  \end{cases}
$$
It is sufficient to check that game $\langle P_n, h, \cnst{s} \rangle$ is losing for all $n$. For each natural $s$, we prove this statement by induction on $n$. 

Base case $n=1$. The game \pathgame($\frac s{2s}$,$\frac s{4s-1}$,$A$,$B$) \quad  is losing by theorem~\ref{thm:clique-win}.

Induction step. Consider the two leftmost vertices $A$ and $B$ and all possible hat color assignments to sage $B$. Sage $A$ names $s(4s-1)$ colors from set $\{0,1, 2, \dots, 2s-1\}$ in total. Therefore some color $c_A$ occurs in his answers at most $[\frac{s(4s-1)}{2s}]=2s-1$ times. Give to sage~$A$ the hat of this color. Then sage $B$ sees color $c_A$ and knows, for which $2s-1$ colors of his hat sage $A$ names color~$c_A$. So, sage $B$ may assume that the color of his own hat is taken from the set $C_B$ consisting of $4s-1-(2s-1)=2s$ colors. At that moment, the adversary declares\footnote{ Formally, this escapade is a violation of game rules. The adversary should have inform the sages about possibility of this event before they start to discuss their strategies.}  that in current hats placement the color of $B$'s hat belongs to $C_B$ and inform the other sages what is the set $C_B$.
Then the game from induction step takes place on the remained graph and it is losing.
\end{proof}

Combining corollary \ref{cor:s-path} and theorem \ref{thm: path:s-lose} we obtain the following theorem.

\begin{theorem}\label{thm:s-pathclass}
$\hgn_s(\mathcal{P}\hskip-.5pt ath)= 4s-2$. More precisely, 

a) $\hgn_s(P_n) < 4s-1$ for all $n$. 

b) $\hgn_s(P_n) = 4s-2$ for $n\geq 2s$.
\end{theorem}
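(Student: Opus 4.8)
The plan is to combine the two previously established results directly, since Theorem~\ref{thm:s-pathclass} is essentially a packaging statement. First I would observe that part (a) is exactly Theorem~\ref{thm: path:s-lose}: the game $\langle P_n, \cnst{(4s-1)}, \cnst s\rangle$ is losing for every $n$, which by definition of $\hgn_s$ means $\hgn_s(P_n) < 4s-1$, i.e.\ $\hgn_s(P_n)\le 4s-2$, for all $n$. This also immediately gives $\hgn_s(\mathcal{P}\hskip-.5pt ath) = \sup_n \hgn_s(P_n) \le 4s-2$.

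Next, for the lower bound in part (b), I would invoke Corollary~\ref{cor:s-path}, which states that $\langle P_{2s}, \cnst{(4s-2)}, \cnst s\rangle$ is winning; hence $\hgn_s(P_{2s}) \ge 4s-2$. Together with the upper bound from part (a) this yields $\hgn_s(P_{2s}) = 4s-2$. To extend this to all $n\ge 2s$, I would note the trivial monotonicity fact that adding vertices to a path can only help the sages: if $n\ge 2s$, then $P_{2s}$ is an induced subgraph of $P_n$, so a winning strategy on $P_{2s}$ (ignoring the extra sages, or rather letting them guess arbitrarily) is still winning on $P_n$ with the same constant hatness $4s-2$; thus $\hgn_s(P_n) \ge \hgn_s(P_{2s}) = 4s-2$. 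Combined with part (a) we get $\hgn_s(P_n) = 4s-2$ for all $n\ge 2s$.

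Finally, for the class statement $\hgn_s(\mathcal{P}\hskip-.5pt ath) = 4s-2$, I would combine the two directions: the upper bound $\hgn_s(\mathcal{P}\hskip-.5pt ath)\le 4s-2$ from part (a), and the lower bound $\hgn_s(\mathcal{P}\hskip-.5pt ath) \ge \hgn_s(P_{2s}) = 4s-2$ from part (b). Since $\mathcal{P}\hskip-.5pt ath$ contains $P_{2s}$, the supremum is attained and equals $4s-2$.

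I do not expect any genuine obstacle here, as all the mathematical content lives in Corollary~\ref{cor:s-path} and Theorem~\ref{thm: path:s-lose}. The only point requiring a word of care is the monotonicity claim $\hgn_s(P_n)\ge\hgn_s(P_{m})$ for $n\ge m$ when $m\ge 2s$ — one should state explicitly that a winning strategy on an induced subgraph lifts to the whole graph (the extra sages simply do not participate, or equivalently, one restricts attention to the sub-path and uses the fact that the adversary's freedom on the extra vertices is irrelevant to whether the sub-path's strategy succeeds). This is immediate but worth one sentence so the proof is self-contained.
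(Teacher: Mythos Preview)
Your proposal is correct and matches the paper's approach exactly: the paper simply states that the theorem follows by combining Corollary~\ref{cor:s-path} and Theorem~\ref{thm: path:s-lose}. Your added remark on the monotonicity $\hgn_s(P_n)\ge\hgn_s(P_m)$ for $n\ge m$ via lifting the winning strategy to a supergraph is a welcome clarification that the paper leaves implicit.
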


\subsection{Petals and royal petunias}

Recall that the \emph{star} graph $K_{1,n}$ is a tree of $n+1$ vertices, where one of the vertices, denote it by $A$, is a root and all other are leaves. We need the following lemma about star graphs which is similar to \cite[theorem 7]{butler2009hat}. For any positive integers $s$ and $H$ let  $h_{s,H}$, $g_s$ be the following functions on $V(K_{1,n})$:
$$
h_{s,H}(v)=\begin{cases}
    s+1& v\ne A,\\
    H  & v=A,
\end{cases}
\qquad
g_s(v)=\begin{cases}
    1& v\ne A,\\
    s& v=A.
\end{cases}
$$

\begin{lemma}\label{lem:star}
For any positive integers $s$ and $H$ there exists integer $n$ such that the game $\langle K_{1,n}, \ h_{s,H}, \ g_s \rangle $ is winning. 
\end{lemma}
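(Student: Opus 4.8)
The plan is to prove this by induction on $s$, or more naturally by a direct counting argument combined with a pigeonhole principle applied to the leaves of the star. The key observation is that sage $A$ at the root has $H$ possible colors and $s$ guesses, so we want the leaves — each of hatness $s+1$ and one guess — to ``cover'' enough information to pin down $A$'s color whenever $A$ himself fails.

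First I would fix the structure of the strategy. Each leaf $v$ sees only $A$, so leaf $v$'s guess is a function $f_v\colon [H]\to [s+1]$ of $A$'s color. Sage $A$ sees all the leaves; given the tuple of leaf-colors $(c_v)_{v}$, $A$ must produce a set of $s$ guesses. The game is winning iff: for every color assignment, either some leaf guesses correctly (i.e.\ $f_v(c_A)=c_v$ for some $v$), or $A$ guesses correctly. Equivalently, the adversary wins iff he can choose $c_A\in[H]$ and then choose each $c_v\neq f_v(c_A)$ (which he can always do, since each leaf has $s+1\ge 2$ colors) in such a way that $A$'s resulting guess-set of size $s$ misses $c_A$. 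So the sages win iff there is a choice of the $f_v$'s such that: for every $c_A$, the number of ``leaf-configurations'' $(c_v)$ consistent with $c_A$ being wrong for all leaves is still small enough that $A$ can always recover $c_A$ from $(c_v)$ up to $s$ candidates. I would make this precise by the standard reformulation: the strategy of $A$ partitions the leaf-configuration space $\prod_v [s+1]$ into ``fibers'' and $A$ wins on configuration $\vec c$ against root-color $x$ iff $x$ lies in $A$'s guess set for $\vec c$; so the sages win iff one can assign to each $\vec c$ a set $S(\vec c)\subseteq[H]$ of size $\le s$ so that for every $x\in[H]$, every leaf-configuration $\vec c$ with $c_v\neq f_v(x)$ for all $v$ satisfies $x\in S(\vec c)$.

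The heart of the argument is then a probabilistic / counting existence proof for the $f_v$. I would pick the $f_v$ uniformly at random and independently, and estimate, for a fixed pair $x\neq x'$ in $[H]$, the probability that there is a leaf-configuration simultaneously ``bad for $x$'' and ``bad for $x'$'' — more carefully, I want that for each $\vec c$ the set $B(\vec c)=\{x: \forall v,\ c_v\ne f_v(x)\}$ has size $\le s$, uniformly over all $\vec c$; then taking $S(\vec c)=B(\vec c)$ works. For a fixed $\vec c$ and a fixed $(s+1)$-subset $\{x_0,\dots,x_s\}\subseteq[H]$, the probability that all of $x_0,\dots,x_s$ lie in $B(\vec c)$ is $\prod_v \Pr[c_v\notin\{f_v(x_0),\dots,f_v(x_s)\}]$; since $f_v(x_0),\dots,f_v(x_s)$ are $s+1$ i.i.d.\ uniform values in a set of size $s+1$, there is a positive probability (bounded away from $1$, something like $\le 1-(s+1)!/(s+1)^{s+1}\cdot\frac{1}{s+1}$, in any case some constant $\rho=\rho(s)<1$) that they avoid $c_v$. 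So this probability is $\le \rho^{\,|\{v\}|}=\rho^{\,n}$. Union bound over the $(s+1)^n$ choices of $\vec c$ and the $\binom{H}{s+1}$ choices of the bad subset gives failure probability $\le (s+1)^n\binom{H}{s+1}\rho^n$. The main obstacle — and the only real calculation — is making sure the constant $\rho$ satisfies $(s+1)\rho<1$ so that $\bigl((s+1)\rho\bigr)^n\binom{H}{s+1}\to 0$ as $n\to\infty$; this needs a careful elementary estimate of the avoidance probability, and I would double-check it (it may require a slightly smarter choice than fully uniform $f_v$, e.g.\ insisting the restriction of the relevant values is ``spread out'', but I expect plain uniform works after the computation). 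Once that inequality holds, for $n$ large the failure probability is $<1$, so a good family $(f_v)$ exists, and together with $A$'s strategy $S(\vec c)=B(\vec c)$ this is a winning strategy, completing the proof.

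Alternatively, and perhaps cleaner for the writeup, I would give an explicit/greedy construction: build the leaves one at a time, maintaining the invariant that the number of ``surviving'' root-colors compatible with all leaves so far failing is controlled, or use an averaging argument showing that adding one more leaf multiplies a suitable potential by a factor $<1$; either packaging reduces to the same constant estimate. I would present whichever is shortest. The statement only claims \emph{existence} of some $n$, so no effort is needed to optimize $n$, which keeps the estimates loose and the proof short.
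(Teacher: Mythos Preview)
Your reformulation is exactly right and matches the paper: with leaf strategies $f_v\colon[H]\to[s+1]$, sage $A$ returns the set $B(\vec c)=\{x\in[H]:\forall v,\ f_v(x)\ne c_v\}$, and the sages win iff $|B(\vec c)|\le s$ for every $\vec c$.

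However, the probabilistic step as you wrote it has a genuine gap. With $f_v$ uniform and independent, the per-leaf avoidance probability is exactly $\rho=(s/(s+1))^{s+1}$, hence $(s+1)\rho=s^{s+1}/(s+1)^s$; this equals $81/64>1$ already at $s=3$ and tends to infinity with $s$. So the bound $(s+1)^n\binom{H}{s+1}\rho^n$ diverges for $s\ge 3$, and ``plain uniform works after the computation'' is false as stated. The repair is to drop the union over $\vec c$: for a fixed $(s{+}1)$-set $T=\{x_0,\dots,x_s\}$, the event ``some $\vec c$ satisfies $T\subseteq B(\vec c)$'' is exactly ``for every $v$, the restriction $f_v|_T$ is not a bijection onto $[s+1]$'', which has probability $\bigl(1-(s+1)!/(s+1)^{s+1}\bigr)^n$. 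Now a union bound over only $\binom{H}{s+1}$ subsets succeeds for large $n$. (This is precisely the random construction of a perfect hash family $\mathrm{PHF}(n;H,s+1,s+1)$, which the paper invokes in the corollary immediately following the lemma.)

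The paper's own proof bypasses probability with a direct explicit construction: let the leaves range over \emph{all} functions $[H]\to[s+1]$. Given any $s+1$ putative ``bad'' colors $c_0,\dots,c_s$, some leaf $v$ has $f_v(c_i)=i$ for each $i$; whatever hat color $k\in[s+1]$ that leaf received, $f_v(c_k)=k=c_v$, so $c_k\notin B(\vec c)$, forcing $|B(\vec c)|\le s$. This is shorter and fully explicit; your route, once corrected as above, is the natural path to smaller $n$ and dovetails with the PHF corollary.
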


\begin{proof}
By a ``scrap-heap'' we mean $s+1$ heaps of stones containing $H$ stones in total (the stones are numbered from 0 to $H-1$, and the heaps are numbered by 0, 1, \dots, $s$ i.\,e.\ by possible hat colors of peripheral sages, empty heaps are allowed). Let $n=H^{s+1}$ be the number of all possible scrap-heaps. Define the strategy of the sages on graph $K_{1,n}$. Give a unique scrap-heap to each sage $B_i$. The strategy of $B_i$ is to name the index of the heap containing the stone $c_A$.  The strategy of sage $A$ is to enroll those colors of his own hat, for which none of $B_i$ has guessed correctly, and to name all listed colors. This is possible because the list contains at most $s$ colors. Indeed, if the list contains colors $c_0$, $c_1$, \dots, $c_{s}$, then consider any scrap-heap, in which for all~$i$ the stone $c_{i}$ lies in $i$-th heap. Without loss of generality one can assume that the owner of the scrap-heap has received a hat of color~0 in  current hats placement. But then he certainly correctly guesses his own color, if the sage $A$ has received hat of color $c_0$ that contradicts the definition of~$c_0$.
\end{proof}

The size of the star $n$ in the proof above can be drastically decreased if we use special combinatorial tools. A~\emph{perfect hash family} $\text{PHF}(N; k, v, t)$ is an $N \times k$ array on $v$ symbols, in which in every $N \times t$ subarray, at least one row is comprised of distinct symbols. A~\emph{perfect hash family number} $\text{PHFN}(k, v, t)$ is the smallest value $N$ for which $\text{PHF}(N; k, v, t)$ exists.

\begin{corollary}
    The game $\langle K_{1,n}, \ h_{s,H}, \ g_s \rangle $ is winning if and only if \\ 
    $\text{\normalfont{PHF}}(n; H, s+1, s+1)$ exists.
\end{corollary}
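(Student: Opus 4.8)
The plan is to show the equivalence by carefully matching the structure of a winning strategy on $K_{1,n}$ with the defining property of a perfect hash family. Recall the setup: the root $A$ has hatness $H$ and $s$ guesses, and each of the $n$ leaves $B_1,\dots,B_n$ has hatness $s+1$ and a single guess. A deterministic strategy for a leaf $B_i$ is a function $f_i\colon [H]\to [s+1]$ sending the color it sees on $A$ to its guess; a strategy for $A$ is a function $F\colon [s+1]^n\to \binom{[H]}{\le s}$ sending the leaf-colors it sees to its set of at most $s$ guesses.

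First I would set up the dictionary between strategies and arrays. Given the leaf strategies $f_1,\dots,f_n$, form the $H\times n$ array $M$ whose $(c,i)$ entry is $f_i(c)$ — equivalently, think of it as an $N\times k$ array with $N=H$ rows and $k=n$ columns on $v=s+1$ symbols. The key observation is: for a fixed color $c_A\in[H]$ placed on $A$, leaf $B_i$ guesses correctly precisely when the actual color on $B_i$ equals $f_i(c_A)$, i.e. when the adversary avoids color $f_i(c_A)$ on $B_i$. So the adversary can make all leaves fail (for this $c_A$) iff for every $i$ there is a color in $[s+1]\setminus\{f_i(c_A)\}$ available, which is always possible since $s+1\ge 2$; the real constraint is a \emph{simultaneous} one across the $s+1$ possible ``bad'' rows, as in the proof of Lemma~\ref{lem:star}.

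Next I would pin down exactly when $A$ can cover the leftover. After the leaves are fixed, $A$ must, upon seeing leaf-colors $(b_1,\dots,b_n)$, name a set of $\le s$ colors that includes every $c_A$ for which no leaf guessed correctly, i.e. every $c_A$ with $f_i(c_A)\ne b_i$ for all $i$. Thus a winning strategy for $A$ exists iff for every $(b_1,\dots,b_n)\in[s+1]^n$ the set $\{c_A\in[H] : f_i(c_A)\ne b_i \ \forall i\}$ has size $\le s$. Negating: the strategy fails iff there exist $s+1$ distinct colors $c_0,\dots,c_s\in[H]$ and leaf-colors $(b_1,\dots,b_n)$ with $f_i(c_j)\ne b_i$ for all $i$ and all $j$ — that is, for each column $i$ the $s+1$ entries $f_i(c_0),\dots,f_i(c_s)$ all avoid the single value $b_i$, which (since there are only $s+1$ symbols) is possible iff these $s+1$ entries are \emph{not} all distinct. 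So the strategy fails iff the $(s+1)\times n$ subarray of $M$ on rows $c_0,\dots,c_s$ has \emph{no} row of distinct symbols. Translating through the transpose: a winning pair of strategies exists iff there is an $H\times n$ array on $s+1$ symbols such that every $(s+1)$-subset of rows contains a row with all symbols distinct — wait, I must be careful with the PHF convention, which is $N\times k$ with the distinctness condition on $N\times t$ \emph{subarrays}, i.e. on $t$-subsets of \emph{columns}. So I should instead build the array as the transpose: an $n\times H$ array (rows indexed by leaves, columns by $A$'s colors, symbols in $[s+1]$), and the condition becomes: every $n\times(s+1)$ subarray (every choice of $s+1$ columns $c_0,\dots,c_s$) contains a row of distinct symbols. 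This is precisely $\text{PHF}(n;H,s+1,s+1)$ with $N=n$, $k=H$, $v=t=s+1$. Hence a winning strategy exists iff such a PHF exists.

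The main obstacle — really just a bookkeeping point — is keeping the two index conventions straight: the PHF definition in the excerpt has $N$ rows, $k$ columns, $v$ symbols, and distinctness on $N\times t$ subarrays (subsets of columns), whereas the natural strategy array wants leaves as one axis and $A$'s colors as the other. One must verify that $N=n$ (number of leaves), $k=H$, $v=t=s+1$ is the correct assignment, and check the edge cases: if $H<s+1$ the ``every $(s+1)$-subset of columns'' condition is vacuous and indeed $A$ alone can always guess (he has $\le H\le s$ colors), matching that $\text{PHF}(n;H,s+1,s+1)$ trivially exists for any $n\ge1$. With the dictionary fixed, both directions are immediate: a PHF gives leaf strategies $f_i$ from its rows and the covering bound for $A$ from the PHF property, and conversely a winning strategy's leaf functions assemble into an array whose failure would contradict the PHF property.
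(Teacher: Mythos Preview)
Your argument is correct and follows essentially the same route as the paper: encode the leaf strategies as an $n\times H$ array on $s+1$ symbols, and show that the condition ``for every leaf-coloring there are at most $s$ colors $c_A$ for which no leaf guesses'' is equivalent to the PHF property on every $(s+1)$-subset of columns. The paper's proof is terser but identical in substance; your only addition is the (harmless) edge case $H\le s$, and your mid-proof slip (``no row of distinct symbols'' in the $(s+1)\times n$ subarray, where you mean column) is immediately repaired by transposing to the correct $n\times H$ orientation.
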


\begin{proof}
    One can describe a strategy of peripheral sages by $n \times H$ array on $s+1$ symbols (=colors): the entry in column $c_A$ in the $i$-th row is the guess of the $i$-th sage (=one of $s+1$ symbols). 

    If $\text{PHF}(n; H, s+1, s+1)$ exists, its $n \times H$ array defines the strategy of peripheral sages. Let the central sage $A$ use the strategy from the lemma proof: he names the list of ``bad'' colors. The number of bad colors does not exceed $s$ due to the same contradiction: for any set of $s+1$ columns(=colors of $A$) there exists a row(=sage) that is comprised of distinct colors, but then the color $c_0$ of the corresponding sage's hat cannot be bad. Thus this strategy is winning.

    In the opposite direction: if the sages have a winning strategy, the corresponding $n \times H$ array is PHF. Otherwise, there exist $s+1$ columns of array for which each row contains at most $s$ distinct symbols. Therefore, we can assign colors to peripheral sages so that none of them will guess correctly, if $A$'s hat is one of these $s+1$ colors. After that, assign the color to $A$ such that $A$ will not guess too.
\end{proof}

\begin{corollary}\label{cor:less-order-of-star}
$1)$ The game $\langle K_{1,n}, \ h_{2,22}, \ g_2 \rangle $ is winning for $n\geq 9$. 

$2)$ For any positive integers $s$ and $H$ and $n\geq \text{\normalfont{PHFN}}(H, s+1, s+1)$  the game $\langle K_{1,n}, \ h_{s,H}, \ g_s \rangle $ is winning.
\end{corollary}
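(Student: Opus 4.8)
The claim to prove is Corollary~\ref{cor:less-order-of-star}, which has two parts. Part 2 is essentially a direct restatement of the previous corollary together with the definition of $\text{PHFN}$: by the previous corollary, the game $\langle K_{1,n}, h_{s,H}, g_s\rangle$ is winning iff $\text{PHF}(n; H, s+1, s+1)$ exists, and by definition such an array exists whenever $n\geq \text{PHFN}(H, s+1, s+1)$, since one can always pad a PHF with extra rows (more sages can only help — extra peripheral sages do not hurt a winning strategy, or formally, a PHF on $N$ rows extends to one on $N'\geq N$ rows by repeating a row). So the plan for part 2 is just to invoke the preceding corollary and unwind the definition.

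Part 1 is the concrete numerical instance: $s=2$, $H=22$, so $s+1=3$, and we must show $\langle K_{1,n}, h_{2,22}, g_2\rangle$ is winning for $n\geq 9$. By part 2 (or the previous corollary) it suffices to show $\text{PHFN}(22, 3, 3)\leq 9$, i.e. to exhibit a $9\times 22$ array on $3$ symbols such that every $9\times 3$ subarray contains a row with three distinct entries (a row that is a permutation of the three symbols). The plan is to write down such an array explicitly. A clean way is to take the $9\times 22$ array whose columns are chosen so that for any three columns $c_1,c_2,c_3$ there is a row where the triple of entries is $\{0,1,2\}$ in some order. Equivalently, thinking of the $22$ columns as $22$ functions $[9]\to\{0,1,2\}$, we need: for any three of these functions, some coordinate $i\in[9]$ separates all three. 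I would search for / cite an explicit $\text{PHF}(9; 22, 3, 3)$ — these small perfect hash families are tabulated (e.g. in Colbourn's tables); $\text{PHFN}(22,3,3)=9$ is a known value — and simply display the array, then verify (or assert that verification is routine) the separating property.

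The main obstacle is purely the explicit construction/verification of the $9\times 22$ ternary array: one must either produce it by hand (tedious case-check that every unordered triple of the $22$ columns is rainbow-separated in some row) or appeal to the literature on perfect hash families. I would handle the verification efficiently by exploiting symmetry — arrange the columns in a structured way (for instance grouped by "type" so that many triples are handled simultaneously) so that only a small number of essentially different triples need checking — rather than brute-forcing all $\binom{22}{3}=1540$ triples. If an explicit small array is inconvenient to typeset, the alternative is to cite a known bound $\text{PHFN}(22,3,3)\leq 9$ from a reference on covering arrays / perfect hash families and let part~1 follow from part~2. Either way, the conceptual content is already finished by the previous corollary; the remaining work is combinatorial bookkeeping on a fixed finite object.
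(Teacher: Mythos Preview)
Your proposal is correct and follows essentially the same route as the paper: part~2 is immediate from the previous corollary (with the trivial observation that a PHF can be padded with extra rows), and part~1 reduces to $\text{PHFN}(22,3,3)\leq 9$. The paper dispatches the latter by citing van~Trung and Martirosyan, who prove the existence of $\text{PHF}(9;27,3,3)$ (any $22$ of its $27$ columns give the required $\text{PHF}(9;22,3,3)$), rather than exhibiting an explicit $9\times 22$ array.
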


\begin{proof} $\text{PHF}(9; 27, 3, 3)$ exists due to result of van Trung and Martirosyan \cite[Remark 4.1]{trung2005new}, this immediately gives us the first statement. The second statement follows from the previous corollary.
\end{proof}

\begin{theorem}\label{thm:s-petal}
$\hgn_s(\mathcal{P}\hskip-.5pt etal)= 4s(s+1)-2$. More precisely, 

a) $\hgn_s(\petal_n) < 4s(s+1)-1$ for all $n$. 

b) $\hgn_s(\petal_n) = 4s(s+1)-2$ for large $n$.
\end{theorem}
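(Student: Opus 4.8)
The theorem has two halves: an upper bound (part a, a losing-game statement) and a lower bound (part b, a winning-game statement). For part a, the plan is to reduce the petal $\petal_n$ to a path-like situation by deleting the stem. Concretely, I would assign the stem $v$ a hatness $4s(s+1)-1$ and guessing number $s$, and apply the half-edge removal machinery: remove all directed edges pointing \emph{into} the stem so that in the resulting graph the stem becomes a source. By Lemma~\ref{lem:scc} (or its corollary about strongly connected components), the game on the whole graph is winning iff it is winning on the strongly connected part, which after the surgery is just the path $P_n$ with inflated guessing numbers on the vertices adjacent to the (now removed-influence) stem. Each such path vertex had its guess multiplied by at most $h(v)=4s(s+1)-1$, but actually the cleaner route is to use Corollary~\ref{cor:half-edge-removing} in the contrapositive, exactly as in Example~\ref{ex:alon} and the reproof of the Bosek et al. theorem: it suffices to exhibit a suitable hatness function on $\petal_n$, delete the stem's incoming edges, and observe the residual game on $P_n$ is losing by Theorem~\ref{thm:s-pathclass}(a) after checking the guess-inflation is absorbed. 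The arithmetic to check is that $\lfloor s\cdot(\text{stuff})/(4s(s+1)-1)\rfloor$ on the central vertex stays small enough, and that the path piece falls under the losing regime $\hgn_s(P_n)<4s-1$; I expect this to reduce to an inequality of the shape $(2s(s+1))^2 \ge 2s(s+1)$ analogous to the $(s-k)^2\ge s-k$ bound in Theorem~\ref{thm-path-P_s}.

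\textbf{Lower bound (part b).} For part b I would build the winning game on a petal with all vertices having hatness $4s(s+1)-2$ and $s$ guesses, for $n$ large. The natural strategy is to combine the star lemma (Lemma~\ref{lem:star}, or the sharper Corollary~\ref{cor:less-order-of-star}) with the path constructors. The stem plus its leaves form a star $K_{1,n}$; on the star we can win a game where the stem has large hatness and $s$ guesses while the leaves have hatness $s+1$ and one guess (Lemma~\ref{lem:star}). Meanwhile the path $P_n$ along the leaves supports, by Theorem~\ref{thm-path-P_s}/Corollary~\ref{cor:s-path}, a winning game with per-vertex fraction roughly $s/(4s-2)$. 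The plan is to take a product (Corollary~\ref{thm:multiplication}) of the star game with the path game vertex-by-vertex along the common leaf set, so that each leaf gets hatness $(s+1)\cdot(4s-2) = 4s(s+1)-2 + (s-2)$... — here I must be careful, so instead I would more likely use Corollary~\ref{cor:to-construct-petunia} (substitution with reducing): substitute the path game into the star game at each leaf, reducing by the common factor so that each leaf ends with hatness exactly $4s(s+1)-2$ and guessing number exactly $s$. The stem already has $s$ guesses; if its hatness from the star lemma exceeds $4s(s+1)-2$ we simply shrink it (fewer colors only helps the sages). The example $s=2$, giving hatness $22$, is precisely the content of Example~\ref{ex:1-22-planar} and motivates the numerology.

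\textbf{Main obstacle.} The hard part is getting the fractions to match \emph{exactly} on every leaf simultaneously. The path construction produces vertices with hatness $4s-2$ and one special vertex with hatness $2s-1$ (Theorem~\ref{thm-path-P_s}), and the star construction wants leaves of hatness $s+1$; the product would naively give $(4s-2)(s+1)$, which is larger than the target $4s(s+1)-2$ by exactly $s$. So one cannot literally take a product — one must use the reducing constructor (Theorem~\ref{thm:CancelConstructor} / Corollary~\ref{cor:to-construct-petunia}) to cancel a common factor, and one must arrange the path game so that the relevant $g$-values on leaves share a factor with $h'(\text{stem of star}) = s+1$, or vice versa. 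This bookkeeping of which game plays the role of $G$ and which the role of $G'$, and which vertex absorbs the reduction, is the delicate step; everything else (the two inequalities, the invocation of prior theorems) is routine. I would also need to confirm that ``large $n$'' can be taken uniformly — the star needs $n \ge \mathrm{PHFN}(\cdot)$ and the path needs $n \ge 2s$, so taking $n$ at least the max suffices, and the petal with more leaves than needed still contains the required subgame (extra leaves can be handled by the half-edge removal/domination arguments or simply by noting a winning subgame forces a win).
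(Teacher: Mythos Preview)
Your outline has the right architecture for both halves, but each contains a concrete mis-choice that blocks the argument.

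\textbf{Part (a).} Assigning the stem hatness $4s(s+1)-1$ is fatal. If you remove all directed edges $\vv{uv}$ from path vertices $u$ into the stem $v$, Theorem~\ref{thm:half-edge-removal} multiplies each path vertex's guess count by $h(v)=4s(s+1)-1$; every path vertex then has at least as many guesses as colours and the residual game is trivially winning, so you learn nothing. The whole point of Example~\ref{ex:alon} is that the hub vertices $A,B$ are given \emph{small} hatnesses ($2$ and $3$), keeping the inflation factor tiny. Here the correct move is $h(\text{stem})=s+1$: removing the incoming half-edges then multiplies each path guess only by $s+1$, and the path component becomes $\bigl\langle P_n,\ \cnst{4s(s+1)-1},\ \cnst{s(s+1)}\bigr\rangle$, which is losing by Theorem~\ref{thm: path:s-lose} applied with parameter $s'=s(s+1)$. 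This is exactly what Lemma~\ref{lem:(s+1 s)vertex-removing} packages, and it is the paper's proof. (Your floor expression $\lfloor s\cdot(\text{stuff})/(4s(s+1)-1)\rfloor$ does not arise in either direction of the half-edge lemma; if you try to \emph{add} edges via Corollary~\ref{cor:half-edge-removing} with that denominator, the floor collapses to~$0$.)

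\textbf{Part (b).} Your substitution plan matches the paper's, but the path game you feed in is the wrong one. With $\langle P_n,\cnst{4s-2},\cnst{s}\rangle$ on the path side and leaf hatness $s+1$ on the star side, Corollary~\ref{cor:to-construct-petunia} needs a common divisor of $g(v)=s$ and $h'(z)=s+1$, and $\gcd(s,s+1)=1$: no reduction is possible, and the unreduced product gives leaf hatness $(s+1)(4s-2)=4s(s+1)-2s-2$, not $4s(s+1)-2$. The fix --- and this is the ``delicate step'' you flagged but did not resolve --- is to run the path result at level $s'=s(s+1)$: take $\HG_0=\bigl\langle P_k,\ \cnst{4s(s+1)-2},\ \cnst{s(s+1)}\bigr\rangle$, winning for $k\ge 2s(s+1)$ by Corollary~\ref{cor:s-path}. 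Now $g(v)=s(s+1)$ and $h'(z)=s+1$ share the factor $s+1$, and the reduced substitution produces hatness $(4s(s+1)-2)\cdot 1$ and guess $s\cdot 1$ at every path vertex; the stem already carries $h=H=4s(s+1)-2$ and $g=s$ from Lemma~\ref{lem:star}. The resulting graph is a subgraph of a large petal, which finishes (b).
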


\begin{proof} 
a)  Consider an arbitrary petal. Let $A$ be its stem, $h(A)=s+1$, and let the other vertices $v$ have hatness $h(v)=4s(s+1)-1$. It is sufficient to verify that the game $\langle G, h, \cnst{s} \rangle$ is losing. By lemma \ref{lem:(s+1 s)vertex-removing}, this game is equivalent to the game 
$ \bigl\langle P_n, \ \cnst{4s(s+1)-1}, \ \cnst{s(s+1)}\bigr\rangle $, that is losing by theorem~\ref{thm: path:s-lose}.

b) Let $\HG_0=\bigl\langle P_k, \ \cnst{4s(s+1)-2}, \ \cnst{s(s+1)} \bigr\rangle$. For $k\geq 2s(s+1)$ this game is winning by corollary~\ref{cor:s-path}. Set $H=4s(s+1)-2$ and choose 
$$
n\geq \text{PHFN}(H, s+1, s+1),
$$ 
then the game $\langle K_{1,n}, \ h_{s,H}, \ g_s \rangle $ is winning by corollary \ref{cor:less-order-of-star}. Then we apply corollary \ref{cor:to-construct-petunia} to substitute with reducing game~$\HG_0$ in the place of each of $n$ peripheral sages. We obtain a winning game, where the hatness and the guessing number of all vertices are equal to $4s(s+1)-2$ and~$s$, and the graph is a subgraph of a large petal.
\end{proof}

\begin{corollary}
$\hgn_s(\mathcal{RP})= 4s(s+1)-2$. More precisely, 

a) $\hgn_s(G) < 4s(s+1)-1$ for all royal petunias $G$. 

b) $\hgn_s(G) = 4s(s+1)-2$ for royal petunias $G$ that contain a large petal.
\end{corollary}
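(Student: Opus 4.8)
The plan is to derive the corollary directly from Theorem~\ref{thm:s-petal} together with the gluing machinery already established, since a royal petunia is, by definition, built from petals by repeatedly gluing the stem of a new petal onto an existing vertex. For part a), I would argue that adding a petal by its stem cannot increase the hat guessing number past the bound $4s(s+1)-1$. Concretely, suppose $G$ is a royal petunia and consider the last petal $\petal_n$ added, glued by its stem $A$ to the rest $G_0$. Set $h(A)=s+1$ and give every other vertex hatness $4s(s+1)-1$; it suffices to show $\langle G, h, \cnst s\rangle$ is losing. The petal block contributes a game that, by Lemma~\ref{lem:(s+1 s)vertex-removing}, is losing (it reduces to $\langle P_n, \cnst{4s(s+1)-1}, \cnst{s(s+1)}\rangle$, losing by Theorem~\ref{thm: path:s-lose}), and $A$ is a strong vertex there with guessing number $s$. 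By induction on the number of petals, the game on $G_0$ with $A$ also strong (or with the larger hatness $h(A)$, which only helps the losing side per Theorem~\ref{thm:s-lose-sum}) is losing. Then Theorem~\ref{thm:s-lose-sum} glues the two losing games at $A$ into a losing game on $G$, completing the induction.

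For part b), I would exhibit a winning strategy on a royal petunia containing a large petal. Take the large petal $\petal_n$ with $n$ large enough that Theorem~\ref{thm:s-petal}b) gives a winning game there with all hatnesses $4s(s+1)-2$ and all guessing numbers $s$; in fact, from the proof of Theorem~\ref{thm:s-petal}b), this winning game lives on a subgraph of the petal obtained by a substitution construction, so in particular it sits inside the royal petunia. Since a winning game on a subgraph immediately yields a winning game on any supergraph (the extra sages simply guess arbitrarily, and removing them from consideration leaves a winning subgame — or formally, one extends by isolated play), we conclude $\hgn_s(G)\ge 4s(s+1)-2$ for any royal petunia $G$ containing such a large petal. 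Combined with part a), this gives equality, and hence $\hgn_s(\mathcal{RP}) = 4s(s+1)-2$.

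The main obstacle is making the induction in part a) airtight with respect to the role of the glued vertex: I must track, through each gluing step, that the vertex being glued is always a \emph{strong} vertex (hatness exactly $s+1$, guessing number $s$) in at least one of the two games, as required by Theorem~\ref{thm:s-lose-sum}, and that the hatness inequality $h_1(A)\ge h_2(A)=s+1$ holds with the petal side playing the role with the smaller ($=s+1$) hatness. Since in a royal petunia every newly glued petal is attached \emph{by its stem}, and the stem in the petal game indeed has hatness $s+1$ and guessing number $s$ (this is exactly the hypothesis of Lemma~\ref{lem:(s+1 s)vertex-removing}), the petal block always supplies the strong-vertex side, and the running game $G_0$ can carry the larger hatness. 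One should also double-check the base case (a single petal) and verify that assigning hatness $4s(s+1)-1$ to non-stem vertices is consistent across all blocks sharing a vertex — but a shared vertex is a stem of at most one petal, so the assignment is well-defined. With these points handled, the rest is a routine application of the constructors.
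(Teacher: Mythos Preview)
Your proposal is correct and follows essentially the same route as the paper: show that each petal with stem hatness $s+1$ and the remaining hatnesses $4s(s+1)-1$ is losing, glue inductively via Theorem~\ref{thm:s-lose-sum}, and for b) invoke the large-petal result directly as a subgraph. The only cosmetic difference is that the paper fixes the \emph{root} stem (rather than the last one glued) as the distinguished strong vertex throughout the induction, which makes the invariant easier to maintain; also, your side remark that ``a shared vertex is a stem of at most one petal'' is actually false in a royal petunia (several petals may have their stems glued to the same vertex), but this claim is not needed for the argument.
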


\begin{proof}
a) As we have checked in theorem \ref{thm:s-petal}~a), game $\langle G, h, \cnst{s} \rangle$ is losing, where $G$ is a petal 
and $h$ is a hatness function such that the hatness of the stem equals $s+1$, and the hatnesses of the other vertices are $4s(s+1)-1$. By theorem \ref{thm:s-lose-sum} gluing of stem of such petal to a vertex of another losing game with $s$ guesses gives again a losing game. But royal petunia by definition is constructed by consecutive stem's gluings of petals! Therefore a game on a royal petunia,
where all the vertices have hatness $4s(s+1)-1$ and $s$ guesses, except the first (rooted) stem with hatness $s+1$ and $s$ guesses, is losing. It gives the estimation $\hgn_s(G)<4s(s+1)-1$.

b) Then $\hgn_s(G)=4s(s+1)-2$, since this hatness is already achieved on large petals that are royal petunias too, although they are not very branchy.
\end{proof}

Since royal petunias are outerplanar graphs, we have the following corollary.

\begin{corollary}
$\hgn_s(\mathcal{OP}) \geq  4s(s+1)-2$. 
\end{corollary}

\subsection{Lower estimation for  planar graphs}

In this section, we provide an example of planar graph $G$ with $\hgn(G)\geq 22$. 

\begin{theorem} 
$\hgn(\mathcal{P}\hskip-.5pt lanar) \geq  22$. 
\end{theorem}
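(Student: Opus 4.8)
The plan is to construct an explicit planar graph by assembling the "petal-plus" gadget of Example~\ref{ex:1-22-planar} together with copies glued so that the whole picture stays outerplanar (hence planar) while every vertex ends up with hatness $22$ and a single guess. First I would recall from Theorem~\ref{thm:s-petal} (with $s=2$) that for large $n$ the game $\langle \petal_n,\, \cnst{22},\, \cnst{2}\rangle$ is winning; this is the basic brick, and its construction already produces a graph that is a subgraph of a large petal, so it is outerplanar. The point of Example~\ref{ex:1-22-planar} is that we may take this winning game $\HG$ on $G=\petal_n$ (all vertices $\tfrac{2}{22}$), take the trivial winning edge game $\HG'=$\pathgame($\tfrac12$,$\tfrac12$,,) on $P_2=\{z,B\}$, and apply Corollary~\ref{cor:to-construct-petunia} — substituting the whole of $G$ in place of $z$, with common divisor $s=2$ of $g(v)=2$ and $h'(z)=2$, so $a_v=1$, $b'_v=1$. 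The result $\widetilde\HG$ on $\widetilde G$ is a winning game in which every old vertex of $G$ now has $(\widetilde h,\widetilde g)=(22\cdot 1,\,1\cdot 1)=(22,1)$ and the single new vertex $B$ has $(\widetilde h,\widetilde g)=(2,1)$.

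So the only defect in $\widetilde G$ is the pendant vertex $B$ with hatness $2$ instead of $22$. The natural fix is to absorb $B$ into another copy of the same construction: I would take a second winning game $\HG_2$ of the form $\langle \petal_m,\,\cnst{22},\,\cnst{2}\rangle$, and use Corollary~\ref{thm:multiplication} (product of games at a common vertex) or, better, again Corollary~\ref{cor:to-construct-petunia}, to multiply the hatness at $B$ up to $22$. Concretely, view $B$ as a vertex of hatness $2$, guess $1$, and substitute into it a winning game whose distinguished vertex has hatness $11$, guess $1$ (so that $2\cdot 11=22$) — such a game exists: take $\HG(B)$ from the path constructions, e.g. a game on a path with a vertex of hatness $2s-1$; for $s$ chosen appropriately one gets hatness $11$, or more simply one iterates the petal-plus trick a second time on the other side. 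Either way, after this second gluing $B$ too acquires hatness $22$ and a single guess, and the enlarged graph is still obtained by gluing outerplanar pieces along a single vertex, hence is outerplanar, hence planar. This yields a planar (indeed outerplanar) graph $G$ with a winning strategy for $\langle G,\,\cnst{22},\,\cnst{1}\rangle$, i.e. $\hgn(G)\ge 22$, and therefore $\hgn(\mathcal{P}\hskip-.5pt lanar)\ge \hgn(\mathcal{OP})\ge 22$.

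The main obstacle is bookkeeping rather than conceptual: one must check that at the vertex $B$ the divisibility hypotheses of Corollary~\ref{cor:to-construct-petunia} (or of Theorem~\ref{thm:CancelConstructor}) are genuinely met — namely that the guess number at the substituted distinguished vertex and the hatness $2$ at $B$ share the intended common divisor, so that the reduced products come out to exactly $(22,1)$ and not something larger — and that each building block is independently winning for the claimed parameters (this is where Theorem~\ref{thm:s-petal}, Corollary~\ref{cor:s-path} and Corollary~\ref{cor:less-order-of-star} get invoked, with $s=2$, $H=22$, $n\ge\mathrm{PHFN}(22,3,3)$, and $n\ge 9$ for the star $K_{1,n}$ with $h_{2,22}$). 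A secondary point to verify carefully is planarity of the final assembled graph: since every constructor used here glues a new outerplanar block either at a single vertex or along the neighborhood of a single substituted vertex sitting on the outer face, the outerplanar structure is preserved, but this should be stated explicitly. Once these routine checks are in place the bound $\hgn(\mathcal{P}\hskip-.5pt lanar)\ge 22$ follows immediately.
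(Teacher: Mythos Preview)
Your outline follows the paper's approach closely up to the construction of $\widetilde G$ and the identification of the single ``bad'' pendant vertex $B$ with parameters $\nsfrac{1}{2}$. Two points, however, need correcting.

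\textbf{Outerplanarity is false.} When you substitute the whole petal $\petal_n$ for $z$ in the edge $\{z,B\}$ via Corollary~\ref{cor:to-construct-petunia}, the new vertex $B$ is joined to \emph{every} vertex of $\petal_n$, in particular to the stem $A$ and to all path vertices $v_1,\dots,v_n$. Then $A,B,v_1,v_2$ already span a $K_4$, so $\widetilde G$ is not outerplanar, and neither is any graph containing it as a block. The paper only claims planarity, which does hold: place the path on a horizontal line with $A$ above and $B$ below. So drop the stronger claim $\hgn(\mathcal{OP})\ge 22$; only $\hgn(\mathcal{P}\hskip-.5pt lanar)\ge 22$ follows from this construction.

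\textbf{Fixing the vertex $B$.} Your two suggestions for raising $h(B)$ from $2$ to $22$ do not work as stated. The path game $\langle P_6, h,\cnst6\rangle$ from Theorem~\ref{thm-path-P_s} does have a vertex of hatness $11$, but every vertex there has $6$ guesses, not $1$; multiplying at $B$ would give $g(B)=6$, not $1$, and the other path vertices would also carry $6$ guesses. And ``iterating the petal-plus trick a second time'' only doubles $h(B)$ to $4$. The paper's remedy is simply to take the product (Corollary~\ref{thm:multiplication}) of $k\ge 5$ copies of $\widetilde\HG$ at the common vertex $B$: this keeps every other vertex at $\nsfrac{1}{22}$ and raises $B$ to $\nsfrac{1}{2^k}$ with $2^k\ge 32>22$. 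Gluing planar graphs at a single cut-vertex preserves planarity, so the assembled graph is planar and the bound follows.
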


\begin{proof}
   Let $\petal_n$ be a petal, $\HG =\langle \petal_n, \cnst{22}, \cnst 2\rangle$ (see fig.~\ref{fig:petal-plus}, left), $\widetilde\HG=\langle \widetilde G, \cnst{22}\rangle$ be the game depicted in fig.~\ref{fig:petal-plus}, right, and let $v$ be lowest vertex of graph $\widetilde G$.

   Game $\HG =\langle \petal_n, \cnst{22}, \cnst 2\rangle$ for sufficiently large $n$ is winning by theorem \ref{thm:s-petal}, and then game $\widetilde\HG$ is winning by example~\ref{ex:1-22-planar}.
   Taking by corollary \ref{thm:multiplication} the product of $k\geq 5$ copies of game $\widetilde\HG$ with respect to vertex $v$ we obtain a winning game on planar graph with hatness function that equals 22 for all vertices except $v$ and $h(v)= 2^k\geq 32$ is as large as we wish. 
\end{proof}

The similar construction of a planar graph for an arbitrary $s$ provides the estimation $\hgn_s(\widetilde G)\geq 4(s+1)(s+2)-2$.

\begin{remark}
The minimal size of graph presented in the proof of the previous theorem is 546. Indeed, for $s=2$ game $\HG_0$ in the proof of theorem \ref{thm:s-petal} is a game $\langle P_k, \cnst{22}, \cnst{6} \rangle$. The minimum $k$ for which we know it is winning is $k=12$. We substitute this game on the leaves of graph $K_{1,n}$, for which game $\langle K_{1,n}, \ h_{s,H}, \ g_s \rangle $ must be winning.  
By corollary \ref{cor:less-order-of-star}, it happens for $n\geq 9$ and we obtain the graph on $12\cdot 9+1=109$ vertices. Then we add one more vertex as in fig.~\ref{fig:petal-plus} and after that multiply 5 copies of the graph with respect to some common vertex. The final graph has $109\cdot5+1=546$ vertices.  
\end{remark}

\bibliographystyle{elsarticle-harv}
\bibliography{main}

\begin{thebibliography}{19}
\expandafter\ifx\csname natexlab\endcsname\relax\def\natexlab#1{#1}\fi
\providecommand{\url}[1]{\texttt{#1}}
\providecommand{\href}[2]{#2}
\providecommand{\path}[1]{#1}
\providecommand{\DOIprefix}{doi:}
\providecommand{\ArXivprefix}{arXiv:}
\providecommand{\URLprefix}{URL: }
\providecommand{\Pubmedprefix}{pmid:}
\providecommand{\doi}[1]{\href{http://dx.doi.org/#1}{\path{#1}}}
\providecommand{\Pubmed}[1]{\href{pmid:#1}{\path{#1}}}
\providecommand{\bibinfo}[2]{#2}
\ifx\xfnm\relax \def\xfnm[#1]{\unskip,\space#1}\fi
\bibitem[{Alon et~al.(2020)Alon, Ben-Eliezer, Shangguan and Tamo}]{alon2020hat}
\bibinfo{author}{Alon, N.}, \bibinfo{author}{Ben-Eliezer, O.},
  \bibinfo{author}{Shangguan, C.}, \bibinfo{author}{Tamo, I.},
  \bibinfo{year}{2020}.
\newblock \bibinfo{title}{The hat guessing number of graphs}.
\newblock \bibinfo{journal}{Journal of Combinatorial Theory, Series B} .
\bibitem[{Alon and Chizewer(2022)}]{alonHatGuessingNumber2022}
\bibinfo{author}{Alon, N.}, \bibinfo{author}{Chizewer, J.},
  \bibinfo{year}{2022}.
\newblock \bibinfo{title}{On the hat guessing number of graphs}.
\newblock \bibinfo{journal}{Discrete Mathematics} \bibinfo{volume}{345},
  \bibinfo{pages}{112785}.
\newblock \URLprefix
  \url{https://www.sciencedirect.com/science/article/pii/S0012365X21004982},
  \DOIprefix\doi{10.1016/j.disc.2021.112785}.
\bibitem[{Blažej et~al.(2021)Blažej, Dvořák and Opler}]{blazej_bears_2021}
\bibinfo{author}{Blažej, V.}, \bibinfo{author}{Dvořák, P.},
  \bibinfo{author}{Opler, M.}, \bibinfo{year}{2021}.
\newblock \bibinfo{title}{Bears with hats and independence polynomials}
  \URLprefix \url{http://arxiv.org/abs/2103.07401},
  \href{http://arxiv.org/abs/2103.07401}{{\tt arXiv:2103.07401}}.
\bibitem[{Bosek et~al.(2019)Bosek, Dudek, Farnik, Grytczuk and
  Mazur}]{bosek19_hat_chrom_number_graph}
\bibinfo{author}{Bosek, B.}, \bibinfo{author}{Dudek, A.},
  \bibinfo{author}{Farnik, M.}, \bibinfo{author}{Grytczuk, J.},
  \bibinfo{author}{Mazur, P.}, \bibinfo{year}{2019}.
\newblock \bibinfo{title}{Hat chromatic number of graphs}.
\newblock \bibinfo{journal}{CoRR} \URLprefix
  \url{http://arxiv.org/abs/1905.04108v1},
  \href{http://arxiv.org/abs/1905.04108}{{\tt arXiv:1905.04108}}.
\bibitem[{Bradshaw(2021a)}]{Bradshaw_arxiv.2109.13422}
\bibinfo{author}{Bradshaw, P.}, \bibinfo{year}{2021}a.
\newblock \bibinfo{title}{On the hat guessing number and guaranteed subgraphs}.
\newblock \URLprefix \url{https://arxiv.org/abs/2109.13422},
  \DOIprefix\doi{10.48550/ARXIV.2109.13422}.
\bibitem[{Bradshaw(2021b)}]{Bradshaw_planar}
\bibinfo{author}{Bradshaw, P.}, \bibinfo{year}{2021}b.
\newblock \bibinfo{title}{On the hat guessing number of a planar graph class}.
\newblock \URLprefix \url{https://arxiv.org/abs/2106.01480},
  \DOIprefix\doi{10.48550/ARXIV.2106.01480}.
\bibitem[{Butler et~al.(2009)Butler, Hajiaghayi, Kleinberg and
  Leighton}]{butler2009hat}
\bibinfo{author}{Butler, S.}, \bibinfo{author}{Hajiaghayi, M.T.},
  \bibinfo{author}{Kleinberg, R.D.}, \bibinfo{author}{Leighton, T.},
  \bibinfo{year}{2009}.
\newblock \bibinfo{title}{Hat guessing games}.
\newblock \bibinfo{journal}{SIAM review} \bibinfo{volume}{51},
  \bibinfo{pages}{399 -- 413}.
\bibitem[{Farnik(2015)}]{Farnik2015}
\bibinfo{author}{Farnik, M.}, \bibinfo{year}{2015}.
\newblock \bibinfo{title}{A hat guessing game.}
\newblock Ph.D. thesis. Jagiellonian University.
\bibitem[{Gadouleau and Georgiou(2015)}]{Gadouleau2015}
\bibinfo{author}{Gadouleau, M.}, \bibinfo{author}{Georgiou, N.},
  \bibinfo{year}{2015}.
\newblock \bibinfo{title}{New constructions and bounds for winkler's hat game}.
\newblock \bibinfo{journal}{SIAM Journal on Discrete Mathematics}
  \bibinfo{volume}{29}, \bibinfo{pages}{823--834}.
\newblock \URLprefix \url{https://doi.org/10.1137/130944680},
  \DOIprefix\doi{10.1137/130944680}.
\bibitem[{He et~al.(2020)He, Ido and Przybocki}]{he20_hat_guess_books_windm}
\bibinfo{author}{He, X.}, \bibinfo{author}{Ido, Y.},
  \bibinfo{author}{Przybocki, B.}, \bibinfo{year}{2020}.
\newblock \bibinfo{title}{Hat guessing on books and windmills}.
\newblock \bibinfo{journal}{CoRR} \URLprefix
  \url{http://arxiv.org/abs/2010.13249v1},
  \href{http://arxiv.org/abs/2010.13249}{{\tt arXiv:2010.13249}}.
\bibitem[{He and Li(2020)}]{he_Li_20_hat_guess_dgnr_graphs}
\bibinfo{author}{He, X.}, \bibinfo{author}{Li, R.}, \bibinfo{year}{2020}.
\newblock \bibinfo{title}{Hat guessing numbers of degenerate graphs}.
\newblock \bibinfo{journal}{CoRR} \URLprefix
  \url{http://arxiv.org/abs/2003.04990v1},
  \href{http://arxiv.org/abs/2003.04990}{{\tt arXiv:2003.04990}}.
\bibitem[{Knierim et~al.(2021)Knierim, Martinsson and
  Steiner}]{Knierim_Martinsson_Steiner2021}
\bibinfo{author}{Knierim, C.}, \bibinfo{author}{Martinsson, A.},
  \bibinfo{author}{Steiner, R.}, \bibinfo{year}{2021}.
\newblock \bibinfo{title}{Hat guessing numbers of strongly degenerate graphs}.
\newblock \URLprefix \url{https://arxiv.org/abs/2112.09619},
  \DOIprefix\doi{10.48550/ARXIV.2112.09619}.
\bibitem[{Kokhas and Latyshev(2018)}]{Kokhas2018}
\bibinfo{author}{Kokhas, K.}, \bibinfo{author}{Latyshev, A.},
  \bibinfo{year}{2018}.
\newblock \bibinfo{title}{For which graphs the sages can guess correctly the
  color of at least one hat}.
\newblock \bibinfo{journal}{Journal of Mathematical Sciences}
  \bibinfo{volume}{236}, \bibinfo{pages}{503--520}.
\newblock \URLprefix \url{https://doi.org/10.1007/s10958-018-4128-y},
  \DOIprefix\doi{10.1007/s10958-018-4128-y}.
\bibitem[{Kokhas and Latyshev(2021a)}]{kokhas_cliques_I_2021}
\bibinfo{author}{Kokhas, K.}, \bibinfo{author}{Latyshev, A.},
  \bibinfo{year}{2021}a.
\newblock \bibinfo{title}{Cliques and constructors in “hats” game. {I}}.
\newblock \bibinfo{journal}{Journal of Mathematical Sciences}
  \bibinfo{volume}{255}, \bibinfo{pages}{39--57}.
\newblock \URLprefix \url{https://doi.org/10.1007/s10958-021-05348-9},
  \DOIprefix\doi{10.1007/s10958-021-05348-9}.
\bibitem[{Kokhas and Latyshev(2021b)}]{kokhas_hats_2021}
\bibinfo{author}{Kokhas, K.P.}, \bibinfo{author}{Latyshev, A.S.},
  \bibinfo{year}{2021}b.
\newblock \bibinfo{title}{The hats game. the power of constructors}.
\newblock \bibinfo{journal}{Journal of Mathematical Sciences}
  \bibinfo{volume}{255}, \bibinfo{pages}{124--131}.
\newblock \URLprefix \url{https://doi.org/10.1007/s10958-021-05355-w},
  \DOIprefix\doi{10.1007/s10958-021-05355-w}.
\bibitem[{Kokhas et~al.(2021)Kokhas, Latyshev and
  Retinskiy}]{kokhas_cliques_II_2021}
\bibinfo{author}{Kokhas, K.P.}, \bibinfo{author}{Latyshev, A.S.},
  \bibinfo{author}{Retinskiy, V.I.}, \bibinfo{year}{2021}.
\newblock \bibinfo{title}{Cliques and constructors in “hats” game. {II}}.
\newblock \bibinfo{journal}{Journal of Mathematical Sciences}
  \bibinfo{volume}{255}, \bibinfo{pages}{58--70}.
\newblock \URLprefix \url{https://doi.org/10.1007/s10958-021-05349-8},
  \DOIprefix\doi{10.1007/s10958-021-05349-8}.
\bibitem[{Latyshev and Kokhas(2022)}]{LATYSHEV2022112868}
\bibinfo{author}{Latyshev, A.}, \bibinfo{author}{Kokhas, K.},
  \bibinfo{year}{2022}.
\newblock \bibinfo{title}{The hats game. on maximum degree and diameter}.
\newblock \bibinfo{journal}{Discrete Mathematics} \bibinfo{volume}{345},
  \bibinfo{pages}{112868}.
\newblock \URLprefix
  \url{https://www.sciencedirect.com/science/article/pii/S0012365X22000747},
  \DOIprefix\doi{https://doi.org/10.1016/j.disc.2022.112868}.
\bibitem[{Szczechla(2017)}]{cycle_hats}
\bibinfo{author}{Szczechla, W.W.}, \bibinfo{year}{2017}.
\newblock \bibinfo{title}{The three colour hat guessing game on cycle graphs}.
\newblock \bibinfo{journal}{The Electronic Journal of Combinatorics}
  \bibinfo{volume}{26}.
\bibitem[{Trung and Martirosyan(2005)}]{trung2005new}
\bibinfo{author}{Trung, T.V.}, \bibinfo{author}{Martirosyan, S.},
  \bibinfo{year}{2005}.
\newblock \bibinfo{title}{New constructions for ipp codes}.
\newblock \bibinfo{journal}{Designs, Codes and Cryptography}
  \bibinfo{volume}{35}, \bibinfo{pages}{227--239}.

\end{thebibliography}

\end{document}